\documentclass[11pt,hidelinks]{article}

\usepackage{orcidlink}

\title{High-dimensional long-range statistical mechanical models \\
have random walk correlation functions}

\author{
Yucheng Liu\,\orcidlink{0000-0002-1917-8330}\thanks{Beijing International Center for Mathematical Research,
	Peking University,
	Beijing, China 100871 and Department of Mathematics,
	University of British Columbia,
	Vancouver, BC, Canada V6T 1Z2.
	\href{mailto:yliu135@pku.edu.cn}{yliu135@pku.edu.cn}.
	}
}
%\date{\today}  %for non-arXiv
%\date{December 7, 2025}
\date{\vspace{-5ex}} %for arXiv

%-----------------------------------------------------------------
% Yucheng's setups

\usepackage{amsmath, amssymb, amscd, amsthm, amsfonts}
\usepackage{graphicx} % for including graphics
\usepackage{hyperref} % clickable links

\usepackage{booktabs}
\usepackage{xcolor}
\usepackage{ dsfont, bm}
\usepackage[title]{appendix}
\usepackage{comment}
\usepackage{enumerate}
\usepackage{enumitem}
\usepackage{cite}

\usepackage[textwidth=480pt,textheight=650pt,centering]{geometry} % 11pt

\usepackage{tikz}
%\usetikzlibrary{plotmarks}
%\usetikzlibrary{shapes.misc}
\tikzset{every picture/.style={line width=0.75pt}}

\theoremstyle{plain}
\newtheorem{theorem}{Theorem}[section]
\newtheorem{lemma}[theorem]{Lemma}

\newtheorem{proposition}[theorem]{Proposition}
\newtheorem{corollary}[theorem]{Corollary}

\newtheorem{assumption}[theorem]{Assumption}

\theoremstyle{remark}
\newtheorem{remark}[theorem]{Remark}

%--------------------------------------------------------
% common definitions
\numberwithin{equation}{section}

\newcommand{\ie}{i.e.}

\newcommand{\eps}{\varepsilon}

\newcommand{\Z}{\mathbb{Z}}

\newcommand{\R}{\mathbb{R}}
\newcommand{\C}{\mathbb{C}}

\renewcommand{\P}{\mathbb{P}}
\newcommand{\T}{\mathbb{T}}

\newcommand{\inv}{^{-1}}

\newcommand{\half}{\frac{1}{2}}

\newcommand{\1}{\mathds{1}}

\newcommand{\nl}{\nonumber \\}

\providecommand{\abs}[1]{\lvert#1\rvert}

\providecommand{\biggabs}[1]{\bigg\lvert#1\bigg\rvert}

\providecommand{\norm}[1]{\lVert#1\rVert}

\newcommand{\chip}{{\chi(p)}}

\newcommand{\mup}{{\mu_p}}
\newcommand{\mupc}{{\mu\crit}}

\newcommand{\supSAW}{^{\mathrm{(SAW})}}
\newcommand{\supperc}{^{(\mathrm{perc})}}
\newcommand{\supIsing}{^{(\mathrm{Ising})}}

\newcommand{\veee}[1]{|\!|\!|#1|\!|\!|}

\providecommand{\nnnorm}[1]{\veee {#1}}

\newcommand{\const}{\mathrm{const}}

\newcommand{\crit}{_{p_c}}

\newcommand{\Kub}{K_D}
\newcommand{\Klb}{\Kub\inv}
\newcommand{\tildeD}{P}

\newcommand{\Rd}{{\mathbb R^d}}
\newcommand{\Zd}{\mathbb Z^d}
\newcommand{\Td}{{\mathbb T^d}}

\newcommand{\dc}{d_c}
%\newcommand{\dc}{\orange{d_c}}

% define chXX and commentXX

%-----------------------------------------------------------------
\begin{document}
\maketitle

\begin{abstract}
We consider long-range percolation, Ising model, and self-avoiding walk on $\mathbb{Z}^d$, with couplings decaying like $|x|^{-(d+\alpha)}$ where $0 < \alpha \le 2$, above the upper critical dimensions.
In the spread-out setting where the lace expansion applies, we show that the two-point function for each of these models exactly coincides with a random walk two-point function, up to a constant prefactor. 
Using this, for $0<\alpha < 2$, we prove upper and lower bounds of the form $|x|^{-(d-\alpha)} \min\{ 1, (p_c - p)^{-2} |x|^{-2\alpha} \}$ for the two-point function near the critical point $p_c$. 
For $\alpha=2$, we obtain a similar upper bound with logarithmic corrections.
We also give a simple proof of the convergence of the lace expansion, assuming diagrammatic estimates. 
\end{abstract}

%\noindent
%{\bf MSC Classification:} 60K35, 82B20, 82B27, 82B41, 82B43.

%\smallskip\noindent
%{\bf Keywords:} long-range random walk, percolation, Ising model, self-avoiding walk, lace expansion.

%\setcounter{tocdepth}{3}
%\tableofcontents

\section{Introduction and results}
\subsection{Introduction}

Statistical mechanical models on $\Zd$ exhibit mean-field behaviours above an upper critical dimension.
Models with short-range (finite-range or exponentially decaying) couplings have been studied extensively in the literature, 
and the upper critical dimension is $\dc=6$ for percolation
and is $\dc = 4$ for the Ising model and the self-avoiding walk (SAW).
Two main tools to prove mean-field behaviours for percolation and the self-avoiding walk are the lace expansion \cite{BS85,HS90a,Slad06, HHS03, Hara08} and the (reversed) Simon--Lieb inequality \cite{DP25a,DP25b}.
The Ising model offers a wider range of tools, and mean-field results in dimensions $d > \dc$ have been obtained using the lace expansion \cite{Saka07},
correlation inequalities \cite{Froh82},
and methods based on the random current representation \cite{DP25-Ising,Aize82,AF86}; the last is also used to prove the triviality of the Ising model at $d = \dc = 4$ \cite{AD21}.

Long-range models have couplings that decay polynomially like $\abs x^{-(d+\alpha)}$ with $\alpha > 0$. These models have upper critical dimension
\begin{equation} \label{eq:dc}
\dc = \dc(\alpha) = \begin{cases}
2 \times \min\{ \alpha,  2\}	&(\text{Ising, SAW}) \\
3 \times \min\{ \alpha,  2\}	&(\text{percolation}) ,
\end{cases}
\end{equation}
and they can exhibit mean-field behaviours in low dimensions if $\alpha > 0$ is sufficiently small. 
Both the lace expansion \cite{HHS08,CS15,CS19}
and the random current method for the Ising model \cite{Pani24} have been extended to this setting. 
For percolation, a different non-perturbative approach to mean-field behaviours has been developed recently in 
\cite{Hutc25, Hutc26I}. 
(See also \cite{Hutc26II, Hutc26III} for behaviours at or below the upper critical dimension.)

Common in the analyses based on the lace expansion or the Simon--Lieb inequality is the idea that an interacting model in dimensions $d > \dc$ 
should be close to a random walk (a free model).
Most previous works compare the model to the simple random walk on the underlying graph;
\cite{BS85,HS90a,Slad06, HHS08} used such comparisons in the Fourier space, 
and \cite{HHS03, Saka07, CS15, DP25a, DP25b} used such comparisons in the physical space.
Despite being a simple idea, the freedom to \emph{choose} a random walk to compare to has not been explored much. 
Two exceptions are \cite{CS19, Liu25Rd}, where suitable random walks are chosen based on the interacting model to ease the analyses.
In this paper, we push the comparison idea to the extreme, by showing that the two-point function $G_p$ for spread-out long-range models exactly coincides with the two-point function $S_\mup$ of some random walk, with a perturbed one-step distribution, when $\alpha \le 2$:
\begin{equation}
G_p(x)=A_p S_\mup(x)
	\qquad(x\in \Zd)
\end{equation}
for some constant $A_p$.
The proof uses the lace expansion and is inspired by (but different from) the random walk used in \cite{CS19}. 
The common idea is to include the effects of interaction (the lace function), which decays faster than $\abs x^{-(d+\alpha)}$ when $\alpha$ is small, into the one-step distribution, 
but our treatment makes the analysis of subcritical two-point functions very straightforward.

Using the exact random walk representation, the proof of convergence of the lace expansion avoids deconvolution theorems as in \cite{HHS03,LS24b,CS15} and is arguably one of the simplest. 
Interestingly, when $\alpha$ is larger so that it is no longer possible to include all effects of interaction into the one-step distribution, the simple deconvolution method of \cite{LS24a, LS24b} can be applied.
The new method we develop in this paper therefore complements \cite{LS24b} and covers all remaining values of $\alpha > 0$. 

We also use the random walk representation to prove a uniform estimate for the subcritical two-point function $G_p$ near the critical point $p_c$, of the form
\begin{equation} \label{eq:near_critical}
G_p(x) \asymp
\frac 1 {\abs x^{d-\alpha}} \min\biggl\{ 1 , \frac 1 { (p_c - p)^2 \abs x^{2\alpha}  } \biggr\} 
\end{equation}
when $0 < \alpha < 2$.
We also prove a similar upper bound with logarithmic corrections when $\alpha =2$.
Equation \eqref{eq:near_critical} interpolates between
the $\abs x^{-(d-\alpha)}$ behaviour at criticality \cite{CS15,CS19}
and the $\chip^2 \abs x^{-(d+\alpha)}$ behaviour below criticality \cite{Aoun21}, where $\chip = \sum_{x\in \Zd} G_p(x) \asymp (p_c - p)\inv$ is the \emph{susceptibility}.
This is in contrast with short-range models, for which subcritical two-point functions decay exponentially and \eqref{eq:near_critical} should take the form 
\begin{equation} \label{eq:SR_near_critical}
G_p(x) \asymp
\frac 1 {|x|^{d-2}} \exp\Bigl\{ -\const (p_c-p)^{1/2} |x|  \Bigr\} ,
\end{equation}
with possibly different constants in the upper and lower bounds.
As discussed in \cite{LPS25-universal}, the upper bound of \eqref{eq:near_critical} near criticality can be used to study long-range models on the $d$-dimensional discrete torus
(a box with periodic boundary conditions)
when $d > \dc$, and it is used there to prove that the two-point function 
for the long-range self-avoiding walk on the torus
levels off to a nonzero constant called the ``torus plateau'' at criticality.
For short-range models,
both the upper bound of \eqref{eq:SR_near_critical} and the torus plateau have been proved for percolation \cite{HMS23}, the Ising model \cite{DP25-Ising, LPS25-Ising}, and the self-avoiding walk \cite{Liu25EJP, Slad23_wsaw},
while the lower bound of \eqref{eq:SR_near_critical} remains open for large $x$.
For long-range models, \eqref{eq:near_critical} is only known for percolation when $\alpha < 1$ and $d \ge 1$ \cite{Hutc25};
the lower bound of \eqref{eq:near_critical} is known for the Ising model under the same conditions \cite[Proposition~3.25]{Pani24}.
Our method gives \eqref{eq:near_critical} for all $\alpha < 2$ in dimensions $d > \dc$.

\smallskip \noindent \textbf{Notation.}
We write $f \lesssim g$ to mean that there is a $C>0$ such that $f \le Cg$, and we write $f \lesssim_L g$ if the constant depends on a parameter $L$.
We write $f \asymp_{(L)} g$ to mean that $f \lesssim_{(L)} g$ and $g \lesssim_{(L)} f$, where $L$ may or may not be present.
We write $f\sim g$ to mean that $\lim f/g=1$.

We write $a \vee b = \max \{ a , b \}$ and $a \wedge b = \min \{ a , b \}$.
With $L\ge 1$ and $|x|$ the Euclidean norm of $x\in \R^d$, 
we define
\begin{equation} \label{eq:nnnorm}
\nnnorm x_L = \frac \pi 2 \max\{ \abs x , L\}.
\end{equation}
Note that \eqref{eq:nnnorm} does not define a norm on $\R^d$
but satisfies $\nnnorm x_L = L \nnnorm{ \frac x L }_1$. 
Also note that $\log \nnnorm{ \frac x L }_1 \ge \log \frac \pi 2 > 0$ for all $x$.

\subsection{The models}

We consider percolation, the Ising model, and the self-avoiding walk with long-range coupling, described by a probability distribution $D$ on $\Zd$ that satisfies the following assumption.
The spread-out parameter $L \ge 1$ will be taken large.

\begin{assumption} \label{ass:D}
Let $L\ge 1$ and $\alpha > 0$.
We assume $D: \Zd \to [0,\infty)$ is a probability distribution that is $\Zd$-symmetric (invariant under reflection in coordinate hyperplanes and rotation by $\pi/2$)
and satisfies $D(0) = 0$ and
\begin{equation} \label{eq:D_bounds}
\frac {\Klb L^\alpha} { \nnnorm { x }_L ^{d+\alpha }  }
\le D(x)
\le  \frac {\Kub L^\alpha} { \nnnorm { x }_L ^{d+\alpha }  }
	\qquad (x\ne 0) 
\end{equation}
for some constants $\Kub \ge 1$.
We also assume there is a constant $\Delta \in (0,1)$ for which
the Fourier transform $\hat D(k) = \sum_{x\in\Z^d}D(x) e^{ik\cdot x}$ obeys
\begin{equation} \label{eq:Dhat}
1 - \hat D(k) \begin{cases}
< 2 - \Delta	& (k \in (-\pi, \pi]^d) \\
> \Delta		& ( \norm k_\infty > L\inv ) .
\end{cases}
\end{equation}
\end{assumption}

The assumption $D(0) = 0$ can be relaxed but is convenient. 
A class of distributions that obey Assumption~\ref{ass:D} can be defined as follows. 
Let $v : \R^d \to [0, \infty)$ be a $\Zd$-symmetric, piecewise-continuous function that satisfies $\int_\Rd v(y)dy = 1$ and $v(y) \asymp \nnnorm y_1^{-(d+\alpha)}$ for all $y \in \Rd$. 
We define
\begin{equation}
D(x) = \frac{ v(x/L) }{\sum_{x \ne 0} v(x/L)}  \1_{x\ne 0} ,
\end{equation}
then the denominator is asymptotic to $L^d$ by a Riemann-sum approximation, and \eqref{eq:D_bounds} holds. 
Property \eqref{eq:Dhat} is verified in \cite[Proposition~1.1]{CS08} for $L$ sufficiently large.

We now define the models.
Let $D$ obey Assumption~\ref{ass:D}.
We write $\omega = ( \omega_0, \dots, \omega_{\abs \omega} ) \in \bigcup_{n=1}^\infty \mathcal (\Zd)^n$ for a \emph{path} in $\Zd$, where $\abs \omega$ denotes the length of the path. 
For $x\in \Zd$, we write $\mathcal W(x)$ for the set of all paths from $0$ to $x$, and we write $\mathcal S(x)$ for the set of all self-avoiding paths from $0$ to $x$, \ie, the set of paths $\omega$ such that $\omega_i \ne \omega_j$ for all $0 \le i < j \le \abs \omega$.
Given $p \ge 0$ and $x\in \Zd$, 
the \emph{random walk two-point function} and the \emph{self-avoiding walk two-point function} are defined respectively by
\begin{equation} \label{def:C}
C_p(x) = \sum_{\omega \in \mathcal W(x)}
	\prod_{j=1}^{\abs \omega} pD(\omega_j - \omega_{j-1}) , \qquad
G_p\supSAW(x) = \sum_{\omega \in \mathcal S(x)}
	\prod_{j=1}^{\abs \omega} pD(\omega_j - \omega_{j-1}) .
\end{equation}
The sum defining $C_p(x)$ converges for all $p \in [0, 1)$, 
and it converges also at the critical point $p=1$ in dimensions $d > \alpha\wedge 2$.
We will also consider a random walk with a different one-step distribution $\tildeD(x)$, whose two-point function we denote by $S_p(x)$.

For percolation, let $p \in [0, \norm D_\infty \inv ]$.
To each pair of vertices $\{x, y \}$ in $\Zd$, 
we associate independently a Bernoulli random variables $n_{\{x, y \}}$ with
\begin{equation}
\P_p( n_{\{x, y \}} = 1 ) = pD(x-y) .
\end{equation}
A configuration is a realisation of these random variables.
We say the bond $\{ x, y \}$ is \emph{occupied} if $n_{\{x, y \}} = 1$ and is \emph{vacant} otherwise. 
The set of all vertices $x$ that are connected to $0$ by some path of occupied bonds is denoted $\mathcal C(0)$. 
The \emph{percolation two-point function} is defined as
\begin{equation}
G_p\supperc(x) = \P_p( x \in \mathcal C(0)) .
\end{equation}

For the Ising model, 
we assume a ferromagnetic coupling $J: \Zd \to [0,\infty)$ that obeys Assumption~\ref{ass:D} (replace $D$ by $J$), and we first consider a finite subset $\Lambda \subset \Zd$ containing $0$. 
The Ising model on $\Lambda$ is a family of probability measures on spin configurations $\sigma \in  \{ \pm 1\}^\Lambda$,
defined using the Hamiltonian
\begin{equation}
H^\Lambda (\sigma) = - \half \sum_{ x,y\in \Lambda } J(y-x) \sigma_x \sigma_y .
\end{equation}
At inverse temperature $\beta \ge 0$,
the \emph{Ising two-point function} is defined by the infinite-volume limit
\begin{equation}
\tilde G_\beta\supIsing (x)
= \lim_{\Lambda \uparrow \Zd } 
	\langle \sigma_0 \sigma_x  \rangle_\beta^\Lambda
= \lim_{\Lambda \uparrow \Zd } 
	\frac{ \sum_{\sigma \in  \{ \pm 1\}^\Lambda} \sigma_0 \sigma_x e^{-\beta H^\Lambda(\sigma)}  } { \sum_{\sigma \in  \{ \pm 1\}^\Lambda} e^{-\beta H^\Lambda(\sigma)} } ,
\end{equation}
which exists because $\langle \sigma_0 \sigma_x  \rangle_\beta^\Lambda$ is increasing in $\Lambda$ by Griffiths inequalities.
For $\beta > 0$, it is convenient to define a probability distribution $D$ by
\begin{equation} \label{eq:D_Ising}
D(x) = p \inv \tanh( \beta J(x) ) ,
\qquad p = \sum_{y\in \Zd} \tanh( \beta J(y) ) .
\end{equation}
Note that $p$ is strictly increasing and continuous in $\beta$,
so we can write $G_p\supIsing(x) = \tilde G_\beta\supIsing (x)$. 
It follows from $\norm J_\infty \lesssim L^{-d}$ and from the Taylor expansion $\tanh(z) = z + O(z^3)$ that 
\begin{equation} \label{eq:Ising_beta}
\frac p \beta ,\, 
\frac{ D(x) } { J(x) }
= 1 + O(L^{-2d})
\end{equation}
uniformly in $x\ne 0$ and in $0 < \beta \le 4$,
when $L$ is large.
Thus, $D$ satisfies Assumption~\ref{ass:D} with $\beta$-independent constants that are slightly perturbed from those of $J$.

We omit the model labels on $G_p$ from now on. 
For each of the models, there is a critical point $p_c \ge 1$ such that $\chi(p) = \sum_{x\in\Zd} G_p(x)$ is finite if and only if $p \in [0,p_c)$.
Under a slightly stronger assumption on $D$, 
Chen and Sakai \cite{CS15,CS19} proved that, 
in dimensions $d > \dc$ (for $\alpha \ne 2$) or $d \ge \dc$ (for $\alpha = 2$), the critical two-point function $G\crit(x)$ is finite and behaves as $\abs x\to \infty$ as
\begin{equation}
G\crit(x) \sim \frac{ \const_L }{ \abs x^{d - \alpha \wedge 2} }
\times \begin{cases}
1 				& (\alpha \ne 2) \\
( \log \abs x )\inv	& ( \alpha = 2 ) .
\end{cases} 
\end{equation}
In particular, this implies finiteness of the bubble diagram $B(p_c) = G\crit * G\crit(0)$ for Ising and SAW and finiteness of the triangle diagram $T(p_c) = G\crit^{*3}(0)$ for percolation, so that the critical exponents $\gamma, \beta, \delta$ take their mean-field values \cite{BFF84, AN84, BA91, Aize82, AF86} (see also \cite[Appendix~A]{HHS08}). 
Note the logarithmic correction for $\alpha = 2$ makes the 
bubble/triangle diagram finite
also at the critical dimension $\dc$.

\subsection{Main results}

Our main result is the following theorem, which represents the two-point function for percolation, the Ising model, or the self-avoiding walk as a random walk two-point function. 
Recall the $\alpha$-dedpendent upper critical dimension $\dc$ in \eqref{eq:dc}.
Let
\begin{equation} \label{eq:ell}
\ell = \begin{cases}
3	&(\text{Ising, SAW}) \\
2	&(\text{percolation}).
\end{cases}
\end{equation}

\begin{theorem} \label{thm:G=S}
Let $\alpha, d > 0$ be such that $d > \dc$
and $\alpha \le 2 + (\ell - 1)(d - \dc)$.
If $\alpha = 2$, we furthur allow $d = \dc$.
Let $D$ (for percolation or SAW) or $J$ (for Ising) obey Assumption~\ref{ass:D},
with $L \ge L_0$ where $L_0$ is sufficiently large.
Then
for all $p\in [\half, p_c]$, 
there are 
 $A_p > 0$, $\mu_p \in [0,1]$,
and a probability distribution $\tildeD$ depending on $p$
such that
\begin{equation} \label{eq:G=S}
G_p(x) = A_p S_\mup (x)
	\qquad (x\in \Zd) ,
\end{equation}
where $S_\mu$ denotes the random walk two-point function generated by $\tildeD$.
Moreover, we have
\begin{equation} \label{eq:perturb}
A_p, \,    \frac \mup p = 1 + O(L^{-d}) ,
\qquad
\frac{ \tildeD(x) }{ D(x) }  = 1+ O(L^{- (\ell-1)d } )
\end{equation}
uniformly in $p \in [\half, p_c]$ and in $x\ne 0$.
We also have
$\mupc = 1$ and 
\begin{equation} \label{eq:near_crit_claim}
1 - \mup \asymp p_c - p .
\end{equation}
\end{theorem}

Theorem~\ref{thm:G=S} gives a way to study $G_p$ by studying a random walk with a slightly perturbed kernel $\tildeD$ that still obeys Assumption~\ref{ass:D}. 
For example, when $0 < \alpha < 2$, from \cite{BL02} we have the heat kernel estimate
\begin{equation} \label{eq:heat_kernel}
\tildeD^{*n}(x) \asymp_L  
\bigg( \frac 1 { n^{d/\alpha} } \wedge \frac n {\abs x^{d+\alpha}} \bigg)
	\qquad (n\ge 2),
\end{equation}
where $\tildeD^{*n}$ denotes the $n$-fold convolution of $\tildeD$ with itself.
(Note \eqref{eq:heat_kernel} also holds for $x\ne 0$ when $n=1$.)
A short computation\footnote
{
The upper bound follows as in \eqref{eq:ub_pf1} and \eqref{eq:ub_pf2} (set $L=1$ there).
The lower bound is trivial for $x=0$. 
For $x\ne 0$, we use \eqref{eq:heat_kernel} 
and $\mu_p^n \ge e\inv$
when $1 \le n \le \abs x^{\alpha} \wedge (-\log\mup)\inv =: N$, 
to get
\begin{equation*}
S_\mup(x) \gtrsim_L  
	\frac 1 {\abs x^{d+\alpha}} \sum_{n=1}^N n \mu_p^n
\ge \frac 1 {\abs x^{d+\alpha}} \frac{ N^2 }{2e}
= \frac 1 {2e \abs x^{d-\alpha} }
	\bigg( 1 \wedge \frac 1 { \abs{ x }^{2\alpha} (-\log \mup)^2 } \bigg) .
\end{equation*}
The result then follows from $-\log \mup \asymp 1-\mup \asymp p_c - p$.
}
then gives
\begin{equation} \label{eq:near_critical_asymp}
G_p(x) = A_p S_\mup(x) 
\asymp_L \frac 1 {\nnnorm x _1^{d-\alpha} } 
	\bigg( 1 \wedge \frac 1 { \nnnorm{ x }_1^{2\alpha}(p_c-p)^2 } \bigg) ,
\end{equation}
as claimed in \eqref{eq:near_critical}.

The proof of Theorem~\ref{thm:G=S} uses a similar upper bound on the heat kernel from \cite{CS15,CS19}, which keeps track of the dependence on $L$ also.
It works for all $\alpha > 0$ and implies the following near-critical upper bound for the random walk two-point function.

\begin{proposition} \label{prop:C}
Let $\alpha > 0$, $d > \alpha \wedge 2$,
and let $D$ obey Assumption~\ref{ass:D}.
If $\alpha \ne 2$,
there is a constant $K_C > 0$ depending only on $\Kub$ and $\Delta$ such that for all $\mu \in [0,1]$ and $x\in \Zd$,
\begin{equation} \label{eq:C_bound}
C_\mu(x) \le \delta_{0,x} +
	\frac{ K_C }{ L^{\alpha\wedge 2} \nnnorm x_L^{d - \alpha \wedge 2 } }
	\bigg( 1 \wedge \frac 1 { \nnnorm{ \frac x L }_1^{2(\alpha\wedge2)} (1-\mu)^2 } \bigg) .
\end{equation}
If $\alpha = 2$, we instead have
\begin{equation} \label{eq:C_bound2}
C_\mu(x) \le \delta_{0,x} +
	\frac{ K_C }{ L^2 \nnnorm x_L^{d - 2 } \log \nnnorm {\frac x L}_1 }
	\bigg( 1 \wedge \frac { ( \log \nnnorm {\frac x L}_1)^2 } { \nnnorm{ \frac x L }_1^4 (1-\mu)^{2} } \bigg) .
\end{equation}
\end{proposition}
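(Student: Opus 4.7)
The plan is to use the random walk expansion $C_\mu(x)=\sum_{n\ge0}\mu^nD^{*n}(x)$ together with two classical estimates on $D^{*n}$: a local-limit bound $\sup_x D^{*n}(x)\lesssim(nL^{\alpha\wedge2})^{-d/(\alpha\wedge2)}$ and a Fuk--Nagaev-type tail bound $D^{*n}(x)\lesssim nL^{\alpha\wedge2}/\nnnorm{x}_L^{d+\alpha\wedge2}$ (each with a logarithmic correction at $\alpha=2$). The monotonicity $C_\mu\le C_1$ immediately gives the ``$1$'' in the $\min$, via the Chen--Sakai critical bound \cite{CS15,CS19}. The task is thus to prove the alternative bound
\[
C_\mu(x)\lesssim\frac{L^{\alpha\wedge2}}{\nnnorm{x}_L^{d+\alpha\wedge2}(1-\mu)^2},
\]
which dominates in the subcritical regime $\nnnorm{x/L}_1^{\alpha\wedge2}(1-\mu)\gg 1$.

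For the two heat-kernel-type estimates, I would derive them from Assumption~\ref{ass:D} and the spectral asymptotic $1-\hat D(k)\asymp L^{\alpha\wedge2}|k|^{\alpha\wedge2}\wedge 1$ underlying \cite{CS15,CS19}. The local-limit bound follows from inverse-Fourier inversion of $\hat D(k)^n$ split at $|k|\asymp L^{-1}$: on the inner region use $|\hat D(k)|^n\le e^{-cnL^{\alpha\wedge2}|k|^{\alpha\wedge2}}$ with a change of variables, and on the outer region \eqref{eq:Dhat} gives $|\hat D(k)|\le1-\Delta$ and hence a geometrically small contribution. The tail bound is the standard Fuk--Nagaev inequality for heavy-tailed step distributions, which can be derived from Assumption~\ref{ass:D} via a ``one big step'' decomposition.

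Next, I split the random-walk sum at the diffusive scale $n_\star\asymp\nnnorm{x/L}_1^{\alpha\wedge2}$. For $n\le n_\star$, the tail estimate gives
\[
\sum_{n\le n_\star}\mu^nD^{*n}(x)\le\frac{L^{\alpha\wedge2}}{\nnnorm{x}_L^{d+\alpha\wedge2}}\sum_{n\ge1}n\mu^n\lesssim\frac{L^{\alpha\wedge2}}{\nnnorm{x}_L^{d+\alpha\wedge2}(1-\mu)^2},
\]
which is exactly the claimed bound. For $n>n_\star$, the local-limit estimate together with the monotonicity of $(nL^{\alpha\wedge2})^{-d/(\alpha\wedge2)}$ in $n$ gives $\sum_{n>n_\star}\mu^nD^{*n}(x)\lesssim\mu^{n_\star}|x|^{-d}/(1-\mu)$. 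In the subcritical regime $n_\star(1-\mu)\gtrsim1$, $\mu^{n_\star}\le e^{-cn_\star(1-\mu)}$ is dominated by any negative power of $n_\star(1-\mu)$ and so is absorbed into the first piece.

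The main technical obstacle is the logarithmic correction at $\alpha=2$: both heat-kernel-type estimates acquire $\log$ factors, and the splitting scale shifts to $n_\star\asymp\nnnorm{x/L}_1^2/\log\nnnorm{x/L}_1$. Producing exactly the $(\log\nnnorm{x/L}_1)^{-1}$ factor in \eqref{eq:C_bound2} requires tracking the spectral asymptotic $1-\hat D(k)\asymp L^2|k|^2\log(1/(L|k|))$ of \cite{CS19} carefully through the splitting and resummation; this is the delicate point but is already essentially contained in their critical argument.
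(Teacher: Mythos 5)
Your proposal follows essentially the same route as the paper: expand $C_\mu(x)=\delta_{0,x}+\sum_{n\ge1}\mu^nD^{*n}(x)$, use the two heat-kernel-type estimates $\norm{D^{*n}}_\infty\lesssim L^{-d}n^{-d/(\alpha\wedge2)}$ and $D^{*n}(x)\lesssim nL^{\alpha\wedge2}\nnnorm{x}_L^{-(d+\alpha\wedge2)}$ (with logarithmic corrections at $\alpha=2$), obtain the ``$1$'' alternative from $C_\mu\le C_1$ and the critical estimate, and obtain the $(1-\mu)^{-2}$ alternative by summing the tail bound against $\sum_n n\mu^n$.

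One small remark: for the subcritical alternative, the paper simply applies the $x$-space tail bound \eqref{eq:Dnx} to \emph{every} term $n\ge1$ and sums $\sum_{n\ge1}n\mu^n=\mu/(1-\mu)^2$; no splitting at $n_\star$ is needed. Your decomposition at $n_\star\asymp\nnnorm{x/L}_1^{\alpha\wedge2}$ with the additional $n>n_\star$ local-limit piece and the $\mu^{n_\star}\le e^{-cn_\star(1-\mu)}$ absorption argument is correct but redundant: you already extend the $n\le n_\star$ sum to all $n$, so the $n>n_\star$ piece contributes nothing new. This also simplifies the $\alpha=2$ log bookkeeping you were worried about, since carrying the single $\log\nnnorm{x/L}_1$ factor from \eqref{eq:Dnx} through $\sum_n n\mu^n$ is mechanical and does not require matching a shifted splitting scale.
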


In view of \eqref{eq:perturb},
Theorem~\ref{thm:G=S} and Proposition~\ref{prop:C} immediately give the same near-critical upper bound for $G_p$.
When $0<\alpha<2$, the upper bound refines \eqref{eq:near_critical_asymp}.

\begin{corollary} \label{cor:near_crit}
Under the hypotheses of Theorem~\ref{thm:G=S},
if $\alpha \ne 2$
there is a constant $K > 0$ independent of $L$ such that for all $p \in [\half,p_c]$ and $x\in \Zd$,
\begin{equation}
G_p(x) \le \delta_{0,x} +
	\frac{ K }{ L^{\alpha\wedge 2} \nnnorm x_L^{d - \alpha \wedge 2 } }
	\bigg( 1 \wedge \frac 1 { \nnnorm{ \frac x L }_1^{2(\alpha\wedge2)} (p_c - p)^2 } \bigg) .
\end{equation}
If $\alpha = 2$, we instead have
\begin{equation}
G_p(x) \le \delta_{0,x} +
	\frac{ K }{ L^2 \nnnorm x_L^{d - 2 } \log \nnnorm {\frac x L}_1 }
	\bigg( 1 \wedge \frac { ( \log \nnnorm {\frac x L}_1)^2 } { \nnnorm{ \frac x L }_1^4 (p_c - p)^2 } \bigg) .
\end{equation}
\end{corollary}

\begin{proof}
Since $G_p(0) = 1$ for all our models, it suffices to prove the bound for $x\ne 0$. 
We prove for $\alpha \ne 2$; the proof for $\alpha = 2$ differs only in using \eqref{eq:C_bound2} instead of \eqref{eq:C_bound}.
By Theorem~\ref{thm:G=S} and Proposition~\ref{prop:C} 
with $\tildeD$, 
for $x\ne 0$ we have
\begin{equation} \label{eq:near_crit_pf1}
G_p(x) = A_p S_\mup(x)
\le  \frac{ A_p K_S }{ L^{\alpha\wedge 2} \nnnorm x_L^{d - \alpha \wedge 2 } }
	\bigg( 1 \wedge \frac 1 { \nnnorm{ \frac x L }_1^{2(\alpha\wedge2)} (1-\mup)^2 } \bigg) .
\end{equation}
From \eqref{eq:near_crit_claim}, we have $1 - \mup \asymp p_c - p$.
From \eqref{eq:perturb}, we have $A_p = 1 + O(L^{-d})$ uniformly in $p$.
Also, it follows from the uniform estimate on $\tildeD(x) / D(x)$ that $\tildeD$ obeys Assumption~\ref{ass:D} with constants independent of $p$ (they can be chosen to be $1+ O(L^{- (\ell-1)d})$ times the corresponding constants for $D$),
so $K_S$ is also independent of $p$. 
(Alternatively, one can use a slight extension of Proposition~\ref{prop:C} given in Proposition~\ref{prop:S} below.)
Therefore, we can bound $A_p K_S \le K$ and conclude the proof.
\end{proof}

\subsection{Lace expansion}

The proof of Theorem~\ref{thm:G=S} uses the lace expansion for the self-avoiding walk \cite{BS85,Slad06}, percolation \cite{HS90a}, and the Ising model \cite{Saka07}.
The lace expansion is a perturbative method that produces a convolution equation for $G_p$, and its convergence requires a small parameter, which is $L\inv$ in our spread-out setting and is $(d-\dc)\inv$ for nearest-neighbour models. We refer to \cite{Slad06} for an introduction to the lace expansion method. 
For the self-avoiding walk, the lace expansion equation is
\begin{equation} \label{eq:lace}
G_p = \delta + pD * G_p + \Pi_p * G_p
	\qquad(0 \le p \le p_c) ,
\end{equation}
where $\delta(x) = \delta_{0,x}$ is the Kronecker delta
and $\Pi_p : \Zd \to \R$ is an explicit but complicated function encoding the self-avoiding constraint. 
Indeed, if $\Pi_p$ is absent, the equation reduces to the renewal equation
\begin{equation} \label{eq:C}
C_p = \delta + pD * C_p
	\qquad (0 \le p\le 1)
\end{equation}
for the random walk two-point function $C_p$.
For percolation and the Ising model, the lace expansion equation is different from \eqref{eq:lace}, but it is a recent observation of \cite{LS24b} that the equation can be transformed into \eqref{eq:lace} in the spread-out setting (see Section~\ref{sec:lace}).
This transformation is crucial for the exact random walk representation.

Common to previous works \cite{HHS03,CS15,CS19,LS24b} based on the lace expansion, we employ a bootstrap argument in $x$ space to prove that, for $L$ sufficiently large
\begin{equation} \label{eq:Pi_claim}
\abs{ \Pi_p(x) }
\lesssim \bigg( \frac 1 { L^{\alpha\wedge2} \nnnorm x_L^{d-\alpha\wedge 2} } \bigg)^\ell 
= \bigg( \frac 1 { L^d \nnnorm{ \frac x L }_1^{d-\alpha\wedge 2} } \bigg)^\ell 
	\qquad (x\ne 0) ,
\end{equation}
where $\ell \in \{2,3\}$ is defined in \eqref{eq:ell}.
The bound is due to geometric considerations of the interaction. 
For example, for percolation, a main contribution to $\Pi_p(x)$ is from the event that $0$ and $x$ are connected by two disjoint paths (doubly-connected), so the BK inequality implies that it decays at least twice ($\ell=2$) as fast as the two-point function $G_p(x)$. 
We do not have new things to say about how $\Pi_p(x)$ is bounded,
but our bootstrap step is simpler than all previous works, since we do not need to use deconvolution theorems, thanks to the exact random walk representation \eqref{eq:G=S}.

\subsection{Strategy of proof}
\label{sec:strategy}

We now present the strategy of the proof of Theorem~\ref{thm:G=S} for $0 < \alpha \le 2$. 
The extension to all $\alpha \le 2 + (\ell - 1)(d - \dc)$ will be discussed near the end of Section~\ref{sec:strategy}.

The main idea is to define a new distribution $\tildeD$ proportional to $pD + \Pi_p$, then \eqref{eq:lace} becomes like \eqref{eq:C}, and we can conclude the theorem using the uniqueness of solutions to the convolution equation. 
If $D(x)$ decays too fast (e.g., nearest-neighbour), this strategy does not work because $\Pi_p(x)$ can take negative values.
However, if $\alpha$ is small, $D(x)$ has a fat tail and might overtake any negative parts of $\Pi_p(x)$, to make $\tildeD(x)$ positive everywhere.\footnote{
We are inspired by a similar observation made on $(\delta + \Pi_p) * D$ by Chen and Sakai in \cite[Remark~3.7]{CS19}. See also Remark~\ref{rmk:CS19}.
}
This is indeed the case when $\alpha \le 2$.
We combine the decay of $\Pi_p(x)$ in \eqref{eq:Pi_claim} with the assumed lower bound of $D(x)$ in \eqref{eq:D_bounds}, to get
\begin{equation}
(pD + \Pi_p)(x) \gtrsim 
	\frac 1 {L^d}  \bigg(  \frac 1 { \nnnorm{ \frac x L }_1 ^{d+\alpha } } 
	- \frac{ O(L^{-(\ell-1)d } ) }{ \nnnorm{ \frac x L }_1^{\ell ( d- \alpha) }} \bigg) 
	\qquad (x\ne 0)
\end{equation}
uniformly in $p \in [\half, p_c]$.
Since the exponents of $\nnnorm{ \frac x L }_1$ satisfy
\begin{equation} \label{eq:decay_comp}
\ell (d-\alpha) 
= d + \alpha + [ ( \ell - 1 ) d - ( \ell + 1 ) \alpha ]
\ge d + \alpha
\end{equation}
when $d \ge \frac{\ell + 1 }{ \ell - 1} \alpha = \dc $ (recall \eqref{eq:dc} and \eqref{eq:ell}), we get
\begin{equation}
(pD + \Pi_p)(x)
\gtrsim  \frac 1 {L^d  \nnnorm{ \frac x L }_1 ^{d+\alpha } } 
	[ 1 -  O(L^{-(\ell-1)d } )  ] 
\ge 0
\end{equation}
when $L$ is sufficiently large.
This allows us to define a probability distribution $\tildeD$ by $\tildeD(0)=0$ and
\begin{equation} \label{eq:tilde_D}
\tildeD(x)
= \frac{ pD(x) + \Pi_p(x) } { \norm{  (pD + \Pi_p) \1_{x\ne 0} }_1  }
= \frac{ pD(x) + \Pi_p(x) } { p + \sum_{y\ne 0} \Pi_p(y) }
	\qquad (x\ne 0) .
\end{equation}
Then, by decomposing $\Pi_p (x) = \Pi_p(0) \delta_{0,x} + \Pi_p(x) \1_{x\ne 0}$ in equation \eqref{eq:lace}, 
we get
\begin{align} \label{eq:mup}
[1 - \Pi_p(0)] G_p &= \delta + [( pD + \Pi_p ) \1_{x\ne0} ] * G_p  \nl
&= \delta + \underbrace{
	\bigg( \frac{ p + \sum_{y\ne 0} \Pi_p(y) }{ 1 - \Pi_p(0)  } \bigg)
	}_{= : \mup}
	\tildeD * [1 - \Pi_p(0)] G_p .
\end{align}
Comparing to the random walk equation \eqref{eq:C} with kernel $\tildeD$, 
once we prove that $\mup \in [0,1]$, uniqueness then yields
\begin{equation}
[1 - \Pi_p(0)] G_p = S_\mup,
\end{equation}
where $S_\mu$ is the random walk two-point function generated by $\tildeD$. The desired \eqref{eq:G=S} follows, with $A_p = [1 - \Pi_p(0)]\inv$.

\begin{remark} \label{rmk:CS19}
In \cite[(3.38)]{CS19}, Chen and Sakai derived another representation of the two-point function, as
\begin{equation} \label{eq:G-CS19}
G_p(x) = ( ( \delta + \tilde \Pi_p ) * \tilde S_{\tilde \mu_p}) (x) 
	\qquad(x\in \Zd) ,
\end{equation}
where $\tilde \Pi_p, \tilde S_{\tilde \mu_p}$ are similar to our $\Pi_p, S_{\mu_p}$ respectively. 
If one wishes to prove \eqref{eq:near_critical_asymp} or Corollary~\ref{cor:near_crit} from \eqref{eq:G-CS19}, Proposition~\ref{prop:C} can still be applied to $\tilde S_{\tilde \mu_p}$, 
but a more involved estimate on $\tilde \Pi_p$ that keeps track of its dependence on $p$ would be necessary. 
One would also need to establish suitable subcritical convolution estimates like \cite[Lemma~8]{LPS25-universal}.
Our method avoids such technicality and requires only the $p$-independent bound in \eqref{eq:Pi_claim}.
\end{remark}

For $\alpha > 2$, the strategy still works if $D(x)$ decays slower than $\Pi_p(x)$. By \eqref{eq:Pi_claim}, this imposes
\begin{equation}
\ell (d - 2) \ge d+\alpha .
\end{equation}
Using the fact that $\ell (\dc - 2) = \dc + 2$,
this is equivalent to the condition
\begin{equation} \label{eq:alpha_cond}
\alpha  \le \ell(d-2) - d  - \ell (\dc - 2) + (\dc + 2)
= 2 + (\ell - 1)(d - \dc) ,
\end{equation}
which we assume. 

The full proof of Theorem~\ref{thm:G=S} is carried out in Section~\ref{sec:G=S}. As shown above, the key step is to prove \eqref{eq:lace} and \eqref{eq:Pi_claim} for all models. 
In the bootstrap argument there, we use an extended version of Proposition~\ref{prop:C}, which we prove first in Section~\ref{sec:S}.
The proof of $\mupc = 1$ uses a tool developed in the Appendix, which bypasses the left-continuity of $\Pi_p$ at $p_c$.

\section{Random walk two-point function}
\label{sec:S}

In this section, we prove a stronger version of Proposition~\ref{prop:C} which also estimates random walks whose kernel $\tildeD(x)$ is close to $D(x)$. 
With $\mu \in [0,1]$, we write $S_\mu$ for the random walk two-point function generated by $\tildeD$, defined as in \eqref{def:C}.
By partitioning according to the length $\abs \omega$, we have
\begin{equation} \label{eq:S_mu}
S_\mu(x) = \delta_{0,x} + \sum_{n=1}^\infty \mu^n \tildeD^{*n}(x) .
\end{equation}
The following proposition implies Proposition~\ref{prop:C}, by taking $P = D$.

\begin{proposition} \label{prop:S}
Let $\alpha > 0$, $d > \alpha \wedge 2$,
and let $D$ obey Assumption~\ref{ass:D}.
There is a constant $K_C > 0$ depending only on $\Kub$ and $\Delta$ such that: 
Whenever $P: \Zd \to [0, \infty)$ is a $\Zd$-symmetric probability distribution such that $P(0) = 0$,
\begin{equation} \label{eq:P_bounds}
\frac { \half \Klb L^\alpha} { \nnnorm { x }_L ^{d+\alpha }  }
\le P(x)
\le  \frac {2 \Kub L^\alpha} { \nnnorm { x }_L ^{d+\alpha }  }
	\qquad (x\ne 0) ,
\end{equation}
and
\begin{equation} \label{eq:Phat}
1 - \hat P(k) \begin{cases}
< 2 - \half \Delta	& (k \in (-\pi, \pi]^d) \\
> \half \Delta		& ( \norm k_\infty > L\inv ) ,
\end{cases}
\end{equation}
we have
\begin{equation} \label{eq:S_bound}
S_\mu(x) \le \delta_{0,x} +
	\frac{ K_C }{ L^{\alpha\wedge 2} \nnnorm x_L^{d - \alpha \wedge 2 } }
	\bigg( 1 \wedge \frac 1 { \nnnorm{ \frac x L }_1^{2(\alpha\wedge2)} (1-\mu)^2 } \bigg) 
\qquad (\alpha \ne 2)
\end{equation}
or
\begin{equation} \label{eq:S_bound2}
S_\mu(x) \le \delta_{0,x} +
	\frac{ K_C }{ L^2 \nnnorm x_L^{d - 2 } \log \nnnorm {\frac x L}_1 }
	\bigg( 1 \wedge \frac { ( \log \nnnorm {\frac x L}_1)^2 } { \nnnorm{ \frac x L }_1^4 (1-\mu)^{2} } \bigg) 
~\qquad (\alpha = 2)
\end{equation}
for all $\mu \in [0,1]$ and $x\in \Zd$.
\end{proposition}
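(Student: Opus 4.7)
The plan for Proposition~\ref{prop:S} is to prove two complementary bounds and take their minimum. The \emph{critical-type} bound $S_\mu(x)\lesssim L^{-\alpha\wedge 2}/\nnnorm{x}_L^{d-\alpha\wedge 2}$ provides the ``$1$'' inside the $\min$ in \eqref{eq:S_bound}, while the \emph{subcritical far-field} bound $S_\mu(x)\lesssim \chi_\mu^2 P(x)\asymp L^{\alpha}/((1-\mu)^2\nnnorm{x}_L^{d+\alpha\wedge 2})$ provides the other factor; the $\alpha=2$ case carries additional logarithmic terms.

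The critical-type bound comes essentially for free from the $\mu=1$ case: since $P^{*n}\ge 0$ and $\mu^n\le 1$ for $\mu\in[0,1]$, the monotonicity $S_\mu\le S_1$ is immediate, and the bound on $S_1$ is the main theorem of \cite{CS15} (respectively \cite{CS19} at $\alpha=2$) applied with $P$ in place of $D$. Because \eqref{eq:P_bounds}--\eqref{eq:Phat} are exactly Assumption~\ref{ass:D} for $P$ with constants perturbed by the universal factors $\tfrac12,2$, that proof applies verbatim and produces a constant depending only on $\Kub$ and $\Delta$.

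For the subcritical bound, my approach is via the Fourier inversion formula
$$ S_\mu(x)=\int_{(-\pi,\pi]^d}\frac{e^{-ik\cdot x}}{(1-\mu)+\mu(1-\hat P(k))}\,\frac{dk}{(2\pi)^d}. $$
The added mass $(1-\mu)$ defines a correlation length $\xi_\mu\asymp L(1-\mu)^{-1/(\alpha\wedge 2)}$. Under the hypothesis on $P$, $1-\hat P(k)\asymp(L|k|)^{\alpha\wedge 2}$ for $|k|\le L^{-1}$ (with an extra $\log(1/(L|k|))$ factor when $\alpha=2$) and $1-\hat P(k)\ge\Delta/2$ otherwise. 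The substitution $k=\eta/\xi_\mu$ rescales the small-$k$ part of the integrand into a fixed ``massive $\alpha\wedge 2$-stable propagator'' $1/(1+|\eta|^{\alpha\wedge 2})$, whose inverse Fourier transform on $\R^d$ has $|y|^{-(d-\alpha\wedge 2)}$ decay for $|y|\ll 1$ and $|y|^{-(d+\alpha\wedge 2)}$ tail for $|y|\gg 1$; undoing the scaling at $|x|\gg\xi_\mu$ recovers exactly the $\chi_\mu^2 P(x)$ far-field behaviour. To make this rigorous I would split the Fourier integral at $|k|=L^{-1}$: on the outer region the denominator is bounded below by $\mu\Delta/2$ and the analysis reduces to the critical CS argument (the mass only helps), while on the inner region the scaling argument above handles the far-field. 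An alternative route, avoiding Fourier, is to prove the pointwise one-big-jump bound $P^{*n}(x)\le K n P(x)$ uniformly in $n\ge 1$ and $x\ne 0$ and then sum $\sum_n\mu^n Kn P(x)\lesssim\chi_\mu^2 P(x)$; however, establishing such a uniform bound is subtle because a naive regional induction blows up the constant exponentially in $n$.

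The main obstacle I expect is the case $\alpha=2$, where the additional $\log(1/(L|k|))$ factor in $1-\hat P(k)$ is marginal and must be tracked carefully through the massive-propagator scaling to reproduce the specific $\log$ factors in \eqref{eq:S_bound2}. CS19 handles this at $\mu=1$ via careful region-splitting in $k$-space, and my plan is to repeat their splittings in the presence of the mass $(1-\mu)$, which only improves the denominator bounds but requires re-examining every estimate to ensure that the logarithmic corrections appear in precisely the form stated.
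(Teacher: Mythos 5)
Your first half (the ``$1$'' alternative) matches the paper: $S_\mu \le S_1$ because $\mu^n \le 1$ and $P^{*n}\ge 0$, and the $S_1$ bound is a rerun of \cite{CS15,CS19} with $P$ in place of $D$, which works because \eqref{eq:P_bounds}--\eqref{eq:Phat} are Assumption~\ref{ass:D} with universally perturbed constants. The paper does this slightly more explicitly by isolating the intermediate estimates \eqref{eq:Dnsup}--\eqref{eq:Dnx} as a lemma, but the content is the same.

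For the subcritical far-field alternative your primary Fourier route has a genuine gap. You propose to split the Fourier integral at $|k|=L^{-1}$ and assert that the outer region ``reduces to the critical CS argument,'' but \cite{CS15,CS19} do not bound $S_1(x)$ by inverting the Fourier integral pointwise in $x$; they work in $x$ space via the $n$-step bounds. On the outer region you need the contribution to decay like $\nnnorm{x}_L^{-(d+\alpha\wedge 2)}$, and extracting decay of order $d+\alpha\wedge 2$ from a $k$-space integral would require roughly that many derivatives of $\hat P$, while $\hat P$ only has fractional smoothness of order $\alpha$ since $P$ has a polynomial tail. This is precisely the obstruction that Gaussian-deconvolution analyses such as \cite{LS24a,LS24b} are designed to circumvent, and the paper explicitly avoids it here. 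Also, when $\alpha>2$ your target $\chi_\mu^2 P(x)$ is not the right shape: the correct far-field scale is $L^2/\nnnorm{x}_L^{d+2}$, slower than $P(x)\asymp L^\alpha/\nnnorm{x}_L^{d+\alpha}$, because the random walk spreads diffusively; your displayed formula $L^\alpha/((1-\mu)^2\nnnorm{x}_L^{d+\alpha\wedge 2})$ also mismatches the $L$-power with the $\nnnorm{x}_L$-power.

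Your ``alternative route'' that you dismiss as subtle is in fact exactly what the paper does, and the subtlety you worry about is already resolved in the literature. The key estimate is not the naive big-jump bound $P^{*n}(x)\le Kn\,P(x)$ (which, as you note, has exponential constant growth in $n$ if proved by regional induction, and is false for $\alpha>2$), but the sharper \eqref{eq:Dnx}: $P^{*n}(x)\lesssim n\,L^{\alpha\wedge 2}/\nnnorm{x}_L^{d+\alpha\wedge 2}$ (times $\log\nnnorm{x/L}_1$ if $\alpha=2$), uniformly in $n\ge 1$, with a constant depending only on $\Kub,\Delta$. This is established around \cite[(1.21)]{CS15} and \cite[Section~2.2]{CS19} and needs only the pointwise upper bound on $P$ in \eqref{eq:P_bounds}. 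With it in hand, the $(1-\mu)^{-2}$ factor is a one-line calculation: substitute \eqref{eq:Dnx} into $S_\mu(x)-\delta_{0,x}=\sum_{n\ge 1}\mu^n P^{*n}(x)$ and use $\sum_{n\ge 1}n\mu^n=\mu/(1-\mu)^2$. No Fourier inversion, no massive-propagator rescaling, and the $\alpha=2$ logarithms come out for free because they are already carried in \eqref{eq:Dnx}. I would recommend abandoning the Fourier plan for this half and simply citing \cite[(1.21)]{CS15}, \cite[Section~2.2]{CS19} for the uniform $n$-step bound.
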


We prove Proposition~\ref{prop:S} by inserting the following estimates on $\tildeD^{*n}$ into \eqref{eq:S_mu}.
The lemma refines the upper bound of \eqref{eq:heat_kernel}.

\begin{lemma} \label{lem:heat_kernel}
Let $\alpha > 0$, $d > \alpha \wedge 2$,
and let $P$ be a $\Zd$-symmetric probability distribution that obeys \eqref{eq:P_bounds} and \eqref{eq:Phat}.
There are constants depending only on $\Kub$ and $\Delta$ such that for all $n\ge 1$ and all $x\in \Zd$,
\begin{align}
\label{eq:Dnsup}
\norm{\tildeD^{*n}}_\infty &\lesssim \frac{1}{L^d} \times
	\begin{cases}
	n^{ - d  / ( \alpha \wedge 2 ) } 	& (\alpha \ne 2) \\
	( n \log \frac {\pi n}2 )^{-d/2}		& ( \alpha = 2 ) ,
	\end{cases} \\
\label{eq:Dnx}
\tildeD^{*n}(x)  &\lesssim  \frac{ n L^{\alpha \wedge 2 } }{ \nnnorm x_L^{d + \alpha \wedge 2 } } \times
	\begin{cases}
	1 						& (\alpha \ne 2) \\
	\log \nnnorm {\frac x L}_1	& ( \alpha = 2 ) .
	\end{cases} 	
\end{align}
\end{lemma}

\begin{remark}
For $\alpha > 2$ and $d > 2$,
if \eqref{eq:Dnx} is improved to 
$\tildeD^{*n}(x)  \lesssim   n^{\alpha/2} L^{\alpha} / \nnnorm x_L^{d + \alpha } $, the proof of Proposition~\ref{prop:S} would give
\begin{equation}
S_\mu(x) \le \delta_{0,x} +
	\frac{ K_C }{ L^{2} \nnnorm x_L^{d - 2} }
	\bigg( 1 \wedge \frac 1 { \nnnorm{ \frac x L }_1^{2+\alpha} (1-\mu)^{(2+\alpha)/2} } \bigg) ,
\end{equation}
which should be sharp.
\end{remark}

\begin{proof}[Proof of Lemma~\ref{lem:heat_kernel}]
These bounds have been obtained for $\tildeD  = D$ under Assumption~\ref{ass:D} in \cite{CS08, CS15,CS19}. 
The proof uses only the estimates \eqref{eq:D_bounds}--\eqref{eq:Dhat} for $D$, so it applies equally to $\tildeD$ satisfying \eqref{eq:P_bounds}--\eqref{eq:Phat}
and produces constants independent of $\tildeD$. 

For \eqref{eq:Dnsup}, we first use the upper and lower bounds in \eqref{eq:P_bounds} to get
\begin{equation}
1 - \hat \tildeD(k) 
\asymp ( L \abs k)^{\alpha \wedge 2} \times 
	\begin{cases}
	1 						& (\alpha \ne 2) \\
	\log \frac \pi {2L \abs k}		& ( \alpha = 2 ) 
	\end{cases} 
\end{equation}
for $\abs k \le L\inv$, from \cite[(A.6)--(A.12)]{CS08} (take $h(y) = \nnnorm y_1^{-(d+\alpha)}$ and $\ell =1$).
Then, in conjunction with \eqref{eq:Phat} and $\norm \tildeD_\infty \lesssim L^{-d}$ from \eqref{eq:P_bounds}, 
from \cite[(A.1)--(A.4)]{CS08} (for $\alpha \ne 2$) or \cite[Section~2.2]{CS19} (for $\alpha=2$) we obtain \eqref{eq:Dnsup}.
For \eqref{eq:Dnx}, only the upper bound of $\tildeD(x)$ is needed, and the argument is given around \cite[(1.21)]{CS15}.
This concludes the proof.
\end{proof}

\begin{proof}[Proof of Proposition~\ref{prop:S}]
We prove the bound for $S_1$ first. 
Since $S_\mu \le S_1$, this will also give the ``$1$'' part of the minimum in \eqref{eq:S_bound}--\eqref{eq:S_bound2} for $S_\mu$.
We follow \cite[(2.3)]{CS15} (for $\alpha\ne 2$) and \cite[(2.1)]{CS19} (for $\alpha = 2$).
If $\alpha < 2$, we set $N = \nnnorm{ \frac x L }_1^\alpha = (\nnnorm x_L / L)^\alpha$. For $\tildeD^{*n}(x)$ in \eqref{eq:S_mu}, we use \eqref{eq:Dnx} if $n \le N$ and use \eqref{eq:Dnsup} if $n \ge N$. This gives
\begin{equation} \label{eq:ub_pf1}
S_1(x) - \delta_{0,x} 
\lesssim \frac{ L^{\alpha} }{ \nnnorm x_L^{d + \alpha } } \sum_{n=1}^N n
	+ \frac{1}{L^d} \sum_{n=N}^\infty	n^{ - d  / \alpha }
\lesssim \frac{ L^{\alpha} N^2 }{ \nnnorm x_L^{d + \alpha } }
	+ \frac{ N^{1-d/\alpha} }{L^d}
= \frac{ 2 }{ L^\alpha \nnnorm x_L^{d - \alpha } } ,
\end{equation}
as desired. 
The same proof works for $\alpha > 2$ after replacing all $\alpha$ by $2$. 
If $\alpha = 2$, 
we set $N = \nnnorm{ \frac x L }_1^2 / \log \nnnorm{ \frac x L }_1$ instead,
and we have
\begin{align}
S_1(x) - \delta_{0,x} 
&\lesssim \frac{ L^{2} \log \nnnorm{ \frac x L}_1 }{ \nnnorm x_L^{d + 2 }  } \sum_{n=1}^N n
	+ \frac{1}{L^d} \sum_{n=N}^\infty	( n \log n )^{ - d  / 2 } \nl
&\lesssim \frac{ L^{2} \log \nnnorm{ \frac x L}_1 N^2 }{ \nnnorm x_L^{d + 2 } }
	+ \frac{ N^{1-d/2} }{L^d (\log N)^{d/2} }
\le \frac{ 2 }{ L^2 \nnnorm x_L^{d - 2 } \log \nnnorm{ \frac x L}_1 } ,
\end{align}
as desired.
 
It remains to prove the other alternative in the minimum in  \eqref{eq:S_bound}--\eqref{eq:S_bound2} for $S_\mu$ when $\mu < 1$. 
We again consider the case $\alpha < 2$ first.
Using \eqref{eq:Dnx} on all terms in \eqref{eq:S_mu}, we have
\begin{equation} \label{eq:ub_pf2}
S_\mu(x) - \delta_{0,x}
\lesssim  \frac{ L^{\alpha } }{ \nnnorm x_L^{d + \alpha } } 
	\sum_{n=1}^\infty n \mu^n
= \frac{ 1 }{ L^{\alpha } \nnnorm x_L^{d - \alpha } \nnnorm{\frac x L}_1^{ 2 \alpha } }
	\frac{\mu}{(1-\mu)^2} ,
\end{equation}
and the desired bound follows from using $\mu \le1$ in the numerator.
The same proof works for $\alpha > 2$ after replacing all $\alpha$ by 2. 
If $\alpha = 2$, we have an extra factor of $\log \nnnorm {\frac x L}_1$ from \eqref{eq:Dnx}, so
\begin{equation}
S_\mu(x) - \delta_{0,x}
\lesssim \frac{ L^{2} \log \nnnorm {\frac x L}_1 }{ \nnnorm x_L^{d + 2 }  }
	\sum_{n=1}^\infty n \mu^n 
= \frac{ 1 }{ L^{2 } \nnnorm x_L^{d - 2 } \log \nnnorm {\frac x L}_1 } 
	 \frac {  (\log \nnnorm {\frac x L}_1)^2 \mu } { { \nnnorm{\frac x L}_1^{ 4 } } (1-\mu)^2 } ,
\end{equation}
and the desired result again follows from $\mu\le1$.
This concludes the proof.
\end{proof}

\section{Proof of Theorem~\ref{thm:G=S}}
\label{sec:G=S}

In this section, we prove Theorem~\ref{thm:G=S}.
Throughout the section, we assume 
$\alpha, d > 0$ satisfy $d > \dc$ if $\alpha \ne 2$, 
or satisfy $d \ge \dc$ if $\alpha = 2$. 
We also assume $D$ (for percolation or self-avoiding walk) or $J$ (for Ising) obeys Assumption~\ref{ass:D}.
As discussed in Section~\ref{sec:strategy},
our main task is to prove the lace expansion equation \eqref{eq:lace} 
and the estimate \eqref{eq:Pi_claim} for $\Pi_p(x)$. 
We employ a bootstrap argument, similar to but simpler than the ones used in \cite{HHS03,CS15,CS19,LS24b}.

\subsection{Bootstrap argument}

The bootstrap argument compares $G_p$ to the upper bound of $C_1$ given in Proposition~\ref{prop:C}. 
Let $K_C$ denote the constant of either \eqref{eq:C_bound} or \eqref{eq:C_bound2}, depending on $\alpha$. 
We define the \emph{bootstrap function} $b: [\half, p_c] \to [0, \infty]$ by
\begin{equation} \label{eq:boot}
b(p) = p \vee
	\sup_{x\neq 0} \frac{G_p(x)}{ K_C L^{-(\alpha \wedge 2)} \nnnorm x_L^{-(d-\alpha \wedge 2)} }  
	\qquad (\alpha \ne 2)
\end{equation}
or by
\begin{equation} \label{eq:boot2}
b(p) = p \vee
	\sup_{x\neq 0} \frac{G_p(x)}{ K_C L^{- 2} \nnnorm x_L^{-(d-2)} ( \log \nnnorm{ \frac x L}_1)\inv }  
	\qquad (\alpha = 2) .
\end{equation}
(Recall our convention that $\nnnorm{ \frac x L }_1 \ge \frac \pi 2 > 1$.)
The function $b(p)$ satisfies $b(1) \le 1$ since $G_1 \le C_1$ \cite[Lemma~2.3]{CS15}, and it is continuous in $p\in [\half, p_c)$ 
by \cite[(3.31)--(3.34)]{CS15} (for $\alpha\ne2$) or \cite[(3.13)--(3.16)]{CS19} (for $\alpha = 2$).
The proof of continuity uses the bound $G_p(x) \le K_{p,L} \nnnorm x_L^{-(d+\alpha)}$ when $p < p_c$, which can be proved via a Simon--Lieb type inequality \cite[Lemma~2.4]{CS15} or via \cite{Aoun21}.
We will prove the implication
\begin{equation}
b(p) \le 3
\quad \implies \quad
b(p) \le 2
\end{equation}
for all $p < p_c$, so that $b(p)$ cannot take any values in the interval $(2,3]$. Continuity then implies that $b(p) \le 2$ for all $p < p_c$. 
Using the left-continuity of $G_p$ at $p_c$, we will also get $b(p_c) \le 2$. 

The \emph{a priori} estimate $b(p)\le 3$ on the bootstrap function gives a control on $G_p(x)$ and allows the lace expansion to be carried out.
For the Ising model,
from $p \le b(p) \le 3$ and \eqref{eq:Ising_beta}
we also get $\beta \le 3 [1 + O(L^{-2d})] \le 4$, so that the $D$ defined in \eqref{eq:D_Ising} obeys Assumption~\ref{ass:D}. 
We collect the lace expansion equation and the diagrammatic estimates in the following proposition, whose proof we postpone to Section~\ref{sec:lace}.
Recall the model-dependent number $\ell \in \{2, 3\}$ defined in \eqref{eq:ell}.

\begin{proposition}[Lace expansion]
\label{prop:lace}
There is an $L_1$ such that for all $L \ge L_1$,
if $b(p) \le 3$ then there exists a $\Z^d$-symmetric function $\Pi_p : \Z^d \to \R$
for which
\begin{equation} \label{eq:lace-boot}
G_p = \delta + p D*G_p + \Pi_p*G_p,
\end{equation}
and
\begin{equation} \label{eq:Pi_bound}
\abs{ \Pi_p(x) }
\lesssim  \frac { \delta_{0,x} } {L^d} + 
\bigg( \frac 1 { L^d \nnnorm{ \frac x L }_1^{d-\alpha \wedge 2 } } \bigg)^\ell 
\times \begin{cases}
1 						& (\alpha \ne 2) \\
( \log \nnnorm {\frac x L}_1 )^{-\ell}	& ( \alpha = 2 ) 
\end{cases} 
\end{equation}
with a constant independent of $p, L, x$.
\end{proposition}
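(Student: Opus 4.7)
The plan is to separate two tasks: deriving the convolution identity~\eqref{eq:lace-boot} and establishing the pointwise bound~\eqref{eq:Pi_bound}. For the self-avoiding walk, the Brydges--Spencer lace expansion~\cite{BS85,Slad06} directly produces~\eqref{eq:lace-boot} with $\Pi_p = \sum_{N\ge 1}(-1)^N \Pi_p^{(N)}$, where each $\Pi_p^{(N)}(x)$ sums $pD$-weighted $N$-laced walks from $0$ to $x$. For percolation~\cite{HS90a} and the Ising model~\cite{Saka07}, the standard expansion has a different shape involving separate kernels composed with $pD$; I will invoke the algebraic rearrangement of~\cite{LS24b} to recast it into the form~\eqref{eq:lace-boot}, at the cost of replacing the original expansion coefficients by modified ones that inherit the same diagrammatic majorants.

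The bootstrap hypothesis $b(p) \le 3$ together with the definition~\eqref{eq:boot}--\eqref{eq:boot2} yields
\begin{equation}
G_p(x) \le \delta_{0,x} + \frac{3 K_C}{L^{\alpha\wedge 2}\nnnorm x_L^{d-\alpha\wedge 2}}
\end{equation}
(with logarithmic correction when $\alpha = 2$), so $G_p$ satisfies the same pointwise decay as $C_1$ up to a factor of $3$. This estimate feeds into the standard diagrammatic bounds --- the BK inequality for percolation, the random-current representation with switching lemma for Ising, and the lace combinatorics for SAW --- to produce a pointwise majorant of each $\Pi_p^{(N)}(x)$ as a sum of graph diagrams built out of $G_p$ and $pD$. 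The key geometric observation is that every such diagram contains at least $\ell$ internally disjoint paths from $0$ to $x$: two for percolation (from the double-connection structure) and three for SAW and Ising (from the minimal lace and the corresponding source-set lower bound in the random-current diagrammatic decomposition).

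To convert this structural fact into the factor $\nnnorm{x/L}_1^{-\ell(d-\alpha\wedge 2)}$ in~\eqref{eq:Pi_bound}, I will apply the pointwise bound on $G_p$ along each of the $\ell$ long paths and sum over the interior vertices using standard convolution estimates; since $d > \dc = \tfrac{\ell+1}{\ell-1}(\alpha\wedge 2)$, iterated convolution of $\nnnorm x_L^{-(d-\alpha\wedge 2)}$ with itself stays in the same decay class (with the expected extra logarithm in the marginal case $\alpha = 2$, $d = \dc$). The $\delta_{0,x}/L^d$ term in~\eqref{eq:Pi_bound} accounts for the contribution at the origin, where the diagrams collapse to a bubble ($\ell = 3$) or triangle ($\ell = 2$) whose finiteness follows from summing Proposition~\ref{prop:C} over $\Zd$.

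Convergence of the sum $\Pi_p = \sum_N \Pi_p^{(N)}$ will be obtained by a small parameter argument: each additional lace contributes a bubble or triangle factor bounded by $O(L^{-(\alpha\wedge 2)})$ coming from Proposition~\ref{prop:C}, so the series converges geometrically once $L \ge L_1$, uniformly in $p \in [\half, p_c]$. The step I expect to require the most care is the uniform-in-$N$ verification of the $\ell$-disjoint-paths property: the extra laces inserted deep into an $N$-lace diagram must be absorbed into transverse bubble or triangle decorations without degrading the long-distance decay along the $\ell$ main paths. This combinatorial check must be carried out separately for SAW, percolation, and Ising, and for the last two it must be verified to be compatible with the rearrangement of~\cite{LS24b}; this is the one place where the three models genuinely differ and where the split $\ell = 2$ versus $\ell = 3$ manifests.
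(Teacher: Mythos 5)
Your overall plan correctly identifies the ingredients: the Brydges--Spencer expansion for SAW, the rearrangement of \cite{LS24b} to recast the percolation and Ising expansions into the form \eqref{eq:lace-boot}, the bootstrap hypothesis $b(p)\le 3$ as input to diagrammatic bounds, and the convolution/open-set structure controlled by $d\ge\dc$. For SAW this essentially reproduces what the paper cites verbatim from \cite[Proposition~3.1]{CS15} and \cite[Lemma~3.6]{CS19}. The issue is with percolation and Ising, and you flag it yourself: you write that the rearranged coefficients must be ``verified to be compatible with the rearrangement of~\cite{LS24b}'' and that the modified coefficients ``inherit the same diagrammatic majorants,'' but you give no mechanism for this inheritance. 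That is the genuine content of the step, and as written the proposal asserts it rather than proves it.

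The paper's proof closes exactly this gap with a short, purely analytic lemma (Lemma~\ref{lem:conv}). The rearrangement of \cite{LS24b} gives $\Pi_p=\delta-h_p^{-1}=-\sum_{n\ge1}(\delta-h_p)^{*n}$, where $h_p$ is the expansion coefficient of \eqref{eq:lace_perc}; the bound \eqref{eq:h_bound} on $h_p-\delta$ is cited (from \cite{CS15,CS19}, building on \cite{HHS03,Saka07_correction}), and Lemma~\ref{lem:conv} shows the decay class $\delta_{0,x}/L^{d}+L^{-(d+\phi)}\nnnorm{x/L}_1^{-(d+\theta)}(\log\nnnorm{x/L}_1)^{-\psi}$ is stable under convolution with a uniform constant. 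A simple induction and geometric series then transfer the bound on $h_p-\delta$ to $\Pi_p$, \emph{without} ever re-examining the $\ell$-disjoint-path structure of the rearranged coefficients. Your plan, by contrast, proposes to re-verify the diagrammatic structure directly for the rearranged expansion, which is both considerably harder (one would have to track how the Neumann resummation intertwines with the $N$-lace decomposition) and unnecessary. There is also a minor quantitative slip: a bubble or triangle built from the $b(p)\le 3$ majorant is $O(L^{-d})$, not $O(L^{-(\alpha\wedge 2)})$ as you claim; this does not damage the convergence argument but is the worse bound, and the paper's Neumann series gives the sharp $O(L^{-(\ell-1)d})$ control on $\sum_{x\ne0}|\Pi_p(x)|$ via \eqref{eq:Pi_L1}.
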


We note that, by considering the two cases $0 < \abs x \le L$ and $\abs x > L$, \eqref{eq:Pi_bound} implies that
\begin{equation} \label{eq:Pi_L1}
\sum_{x\ne 0} \abs{ \Pi_p(x) }
\lesssim \sum_{ 0 < \abs x \le L } \frac 1 { L^{\ell d} }
	+  \sum_{ \abs x > L } \Bigl( \frac  1 { L^{\alpha \wedge 2} \abs x^{d-\alpha \wedge 2 } } \Bigr)^\ell	
\lesssim \frac 1 { L^{(\ell-1)d} } .
\end{equation}
Given $b(p) \le 3$, 
\eqref{eq:lace-boot} and \eqref{eq:Pi_bound} allow us to carry out the strategy of Section~\ref{sec:strategy}:

\begin{proposition} \label{prop:tilde_D}
Let $0 < \alpha \le 2 + (\ell - 1)(d - \dc)$. 
There is an $L_2 \ge L_1$ such that for all $L\ge L_2$,
if $b(p) \le 3$ then 
the distribution $\tildeD$ of \eqref{eq:tilde_D} is well-defined
and equation \eqref{eq:mup} holds.
Moreover, we have
\begin{equation} \label{eq:perturb-boot}
A_p, \,    \frac \mup p = 1 + O(L^{-d}) ,
\qquad
\frac{ \tildeD(x) }{ D(x) },\,
\frac{ 1 - \hat{ \tildeD }(k) } { 1 - \hat D(k) }
= 1 + O(L^{-(\ell-1) d}) ,
\end{equation}
with constants uniform in $p \in [\half, p_c ]$, in $x\ne0$, and in $k \ne 0$.
\end{proposition}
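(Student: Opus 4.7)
The plan is to implement the strategy of Section~\ref{sec:strategy} using Proposition~\ref{prop:lace} as the sole input. The core step, which dictates the size of $L_2$, is to verify that $pD(x) + \Pi_p(x) > 0$ for every $x \ne 0$, so that $\tildeD$ in \eqref{eq:tilde_D} is a well-defined $\Zd$-symmetric probability distribution. Combining the lower bound $pD(x) \ge \half \Klb L^{-d}\nnnorm{x/L}_1^{-(d+\alpha)}$ (from $p \ge \half$ and Assumption~\ref{ass:D}) with the upper bound \eqref{eq:Pi_bound} yields
\begin{equation*}
pD(x) + \Pi_p(x) \ge \frac{1}{L^d\,\nnnorm{x/L}_1^{d+\alpha}}\Bigl(\half \Klb - O(L^{-(\ell-1)d})\,\nnnorm{x/L}_1^{(d+\alpha)-\ell(d-\alpha\wedge 2)}\Bigr).
\end{equation*}
Under the hypothesis $\alpha \le 2+(\ell-1)(d-\dc)$ the exponent $(d+\alpha)-\ell(d-\alpha\wedge 2)$ is nonpositive (this is the content of \eqref{eq:decay_comp}/\eqref{eq:alpha_cond}), and since $\nnnorm{x/L}_1 \ge \pi/2$ the parenthesised quantity is bounded below uniformly by $\half \Klb - O(L^{-(\ell-1)d})$, which is strictly positive once $L \ge L_2$ is large. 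The normalisation constant in \eqref{eq:tilde_D} equals $p + \sum_{y\ne 0}\Pi_p(y)$, and equation \eqref{eq:mup} follows at once by separating the $x=0$ contribution of $\Pi_p$ in \eqref{eq:lace-boot} and dividing through by $1-\Pi_p(0)$, reading off $A_p = (1-\Pi_p(0))^{-1}$ and $\mup$ from the resulting coefficient.

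The $x$-space estimates in \eqref{eq:perturb-boot} are now immediate: \eqref{eq:Pi_bound} at $x=0$ gives $|\Pi_p(0)| \lesssim L^{-d}$, so $A_p = 1 + O(L^{-d})$, while the $\ell^1$ bound \eqref{eq:Pi_L1} gives $\sum_{y\ne 0}\Pi_p(y) = O(L^{-(\ell-1)d})$, so $\mup/p = 1 + O(L^{-d})$. The same computation as in the positivity step upgrades to the pointwise ratio $|\Pi_p(x)/D(x)| \lesssim L^{-(\ell-1)d}$ for all $x\ne 0$, whence $\tildeD(x)/D(x) = 1 + O(L^{-(\ell-1)d})$. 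For the Fourier ratio, $\Zd$-symmetry and $D(0) = \tildeD(0) = 0$ give
\begin{equation*}
1 - \hat{\tildeD}(k) = \frac{p\bigl(1-\hat D(k)\bigr) + \sum_{y\ne 0}\Pi_p(y)\bigl(1-\cos(k\cdot y)\bigr)}{p + \sum_{y\ne 0}\Pi_p(y)},
\end{equation*}
and using the pointwise bound $|\Pi_p(y)| \le O(L^{-(\ell-1)d})\,D(y)$ together with $1-\cos(k\cdot y) \ge 0$ yields $\bigl|\sum_{y\ne 0}\Pi_p(y)(1-\cos(k\cdot y))\bigr| \le O(L^{-(\ell-1)d})\bigl(1-\hat D(k)\bigr)$. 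Combined with the denominator estimate, this gives $(1 - \hat{\tildeD}(k))/(1 - \hat D(k)) = 1 + O(L^{-(\ell-1)d})$ uniformly in $k \ne 0$.

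The main obstacle is the positivity step: it is the conceptual heart of the argument and is exactly what delimits the admissible range of $\alpha$ in Theorem~\ref{thm:G=S}. Once positivity is secured, every other assertion follows by mechanical manipulation of \eqref{eq:Pi_bound} and \eqref{eq:Pi_L1}, with uniformity in $p \in [\half, p_c]$ inherited from the uniformity of those bounds under the bootstrap hypothesis.
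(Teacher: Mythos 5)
Your proposal is correct and follows essentially the same route as the paper: the same positivity computation $pD(x) + \Pi_p(x) \gtrsim L^{-d}\nnnorm{x/L}_1^{-(d+\alpha)}\bigl(\half\Klb - O(L^{-(\ell-1)d})\bigr)$ keyed to the exponent inequality $\ell(d-\alpha\wedge 2) \ge d+\alpha$, the same reading-off of $A_p$ and $\mup$ from \eqref{eq:mup}, and the same use of \eqref{eq:Pi_bound} and \eqref{eq:Pi_L1} for the ratio estimates. The only cosmetic difference is that for the Fourier identity you expand $1-\hat{\tildeD}(k)$ directly from the definition of $\tildeD$, whereas the paper derives it by inserting the already-established pointwise ratio $\tildeD(x)/D(x) = 1 + O(L^{-(\ell-1)d})$ into $\sum_{x\ne 0}\tildeD(x)(1-\cos(k\cdot x))$; both are the same short computation.
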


\begin{proof}
Let $L \ge L_1$ and $b(p) \le 3$, so that we can apply Proposition~\ref{prop:lace}.
We have noted in \eqref{eq:decay_comp} that
\begin{equation}
\ell( d - \alpha) \ge d + \alpha
	\qquad (\alpha \le 2,\ d\ge \dc) ,
\end{equation}
and in \eqref{eq:alpha_cond} that
\begin{equation}
\ell( d-2) \ge d+\alpha
	\qquad ( 2 < \alpha \le 2 + (\ell-1)(d-\dc) ) ,
\end{equation}
so we always have
\begin{equation}
\ell( d - \alpha \wedge 2) \ge d + \alpha .
\end{equation}
Using \eqref{eq:Pi_bound} and the lower bound of $D(x)$ in \eqref{eq:D_bounds}, we have
\begin{equation} \label{eq:PiD_pf}
\abs{ \Pi_p(x) }
\lesssim \frac 1 {L^{\ell d}  \nnnorm{\frac x L}_1^{\ell(d - \alpha \wedge 2)} }
\le \frac 1 {L^{\ell d}  \nnnorm{\frac x L}_1^{d+\alpha} }
\lesssim \frac 1 {L^{(\ell-1)d}} D(x)
	\qquad (x\ne 0).
\end{equation}
By restricting to $L$ large and $p\ge \half$, we can therefore ensure $p D(x) + \Pi_p(x) \ge 0$ and define the probability distribution $\tildeD$ by \eqref{eq:tilde_D}.
Equation \eqref{eq:mup} follows. 

It remains to prove \eqref{eq:perturb-boot}.
For $A_p$ and $\mup/p$, we use the \eqref{eq:Pi_bound}, \eqref{eq:Pi_L1}, and $p\ge \half$ to get
\begin{align}
A_p &= [ 1 - \Pi_p(0) ]\inv = 1 + O(L^{-d}), 
\\
\frac \mup p 
&= A_p \Big( 1 + \frac 1 p \sum_{y\ne 0} \Pi_p(y) \Big)
= A_p [ 1 + O(L^{-(\ell-1) d}) ] 
= 1 + O(L^{-d}) .
\end{align}
For $\tildeD(x) / D(x)$, 
we use the definition of $\tildeD$ in \eqref{eq:tilde_D}, $p\ge\half$, and \eqref{eq:Pi_L1}
to write
\begin{equation}
\tildeD(x) 
= \frac{ D(x) + \frac 1 p \Pi_p(x) } { 1 + \frac 1 p \sum_{y\ne 0} \Pi_p(y) }
= [ 1 + O( L^{-(\ell -1)d}) ] [ D(x) + \frac 1 p \Pi_p(x) ]
	\qquad(x\ne0).
\end{equation}
Then we divide the equation by $D(x)$ and get the desired result from \eqref{eq:PiD_pf}.
For the Fourier transform,
we use $\Zd$-symmetry and the uniform estimate on $\tildeD(x) / D(x)$ to get
\begin{equation}
1 - \hat{ \tildeD}(k)
=  \sum_{x\ne0} \Bigl( 1 + \frac{ \tildeD(x) }{ D(x) } -1 \Bigr) 
	D(x) ( 1 - \cos(k\cdot x))
= [ 1 + O(L^{-(\ell-1) d}) ] ( 1 - \hat D(k) ) .
\end{equation}
This concludes the proof.
\end{proof}

\begin{lemma}[Bootstrap]
\label{lem:boot}
Let $0 < \alpha \le 2 + (\ell - 1)(d - \dc)$. 
There is an $L_3 \ge L_2$ such that for all $L \ge L_3$, 
if $p \in [\half,p_c)$ and $b(p) \le 3$ 
then $\mup < 1$ and $b(p) \le 2$.
\end{lemma}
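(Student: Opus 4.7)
The plan is to combine the exact random walk representation from Proposition~\ref{prop:tilde_D} with the uniform upper bound on $S_\mu$ from Proposition~\ref{prop:S}, thereby bypassing the Fourier/deconvolution step of the bootstrap used in \cite{HHS03,CS15,CS19,LS24b}. Given $p \in [\half, p_c)$ with $b(p) \le 3$ and $L \ge L_2$, Proposition~\ref{prop:tilde_D} supplies $A_p > 0$, a number $\mu_p$, and a probability distribution $\tildeD$ with $G_p = A_p S_{\mu_p}$, together with $A_p, \mu_p/p = 1 + O(L^{-d})$ and $\tildeD(x)/D(x), (1-\hat{\tildeD}(k))/(1-\hat D(k)) = 1 + O(L^{-(\ell-1)d})$. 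For $L$ large enough the latter two ratios lie in $[\half, 2]$, so $\tildeD$ inherits \eqref{eq:P_bounds}--\eqref{eq:Phat} from Assumption~\ref{ass:D}, placing it within the scope of Proposition~\ref{prop:S} with a constant $K_C$ independent of $p$.

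To show $\mu_p < 1$, I would sum $G_p = A_p S_{\mu_p}$ over $x \in \Zd$. Since $\tildeD$ is a probability distribution, the renewal equation $S_\mu = \delta + \mu \tildeD * S_\mu$ gives $\sum_x S_\mu(x) = (1-\mu)^{-1}$ for $\mu < 1$ and $\sum_x S_\mu(x) = \infty$ for $\mu \ge 1$, so
\begin{equation}
\chi(p) = \frac{A_p}{1 - \mu_p}.
\end{equation}
Since $p < p_c$ we have $\chi(p) < \infty$, and $A_p > 0$ then forces $\mu_p < 1$.

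For the improvement $b(p) \le 2$, I would apply the ``$1$'' branch of the minimum in \eqref{eq:S_bound} (if $\alpha \ne 2$) or \eqref{eq:S_bound2} (if $\alpha = 2$) at $\mu = \mu_p \in [0,1]$. For $x \ne 0$ this yields
\begin{equation}
G_p(x) = A_p S_{\mu_p}(x) \le A_p \cdot \frac{K_C}{L^{\alpha \wedge 2} \nnnorm{x}_L^{d - \alpha \wedge 2}} \times \begin{cases} 1 & (\alpha \ne 2) \\ (\log \nnnorm{\frac x L}_1)^{-1} & (\alpha = 2), \end{cases}
\end{equation}
so the supremum appearing in the definition of $b(p)$ in \eqref{eq:boot} or \eqref{eq:boot2} is at most $A_p \le 1 + O(L^{-d}) \le 2$ for $L$ large. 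For the $p$-part of $b(p)$, using $\mu_p/p = 1 + O(L^{-d})$ together with $\mu_p < 1$, I get $p = \mu_p \cdot (1 + O(L^{-d})) \le 2$.

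I do not anticipate a genuine obstacle: the exact representation has converted the bootstrap improvement into the algebraic facts $A_p, p \le 2$, both immediate from the perturbation estimates of Proposition~\ref{prop:tilde_D}. The $\log$ correction in the $\alpha = 2$ case is absorbed transparently because the same $\log \nnnorm{\frac x L}_1$ factor appears in both the denominator of $b(p)$ and the $S_\mu$ bound from Proposition~\ref{prop:S}.
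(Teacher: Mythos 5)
There is a genuine gap, and it lies at the very first step: Proposition~\ref{prop:tilde_D} does \emph{not} give you the identity $G_p = A_p S_{\mu_p}$. What it gives is the convolution equation~\eqref{eq:mup}, namely that $[1-\Pi_p(0)]G_p$ \emph{satisfies} $S = \delta + \mu_p \tildeD * S$. To pass from that to $[1-\Pi_p(0)]G_p = S_{\mu_p}$ you need a uniqueness-of-solutions argument, and for that argument to even make sense you first need $\mu_p < 1$ (so that $S_{\mu_p}$ is a well-defined $\ell^1$ object) as well as $\chi(p) < \infty$ (so that $[1-\Pi_p(0)]G_p \in \ell^1$). Your derivation of $\mu_p < 1$ sums the unproved identity $G_p = A_p S_{\mu_p}$, so it is circular.

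The circularity is easy to break, and the fix is essentially what the paper does. Instead of summing an identity you do not yet have, sum the convolution equation~\eqref{eq:mup} itself over $x$: since $\tildeD$ is a probability distribution and $\chi(p) < \infty$ for $p < p_c$, you get $[1-\Pi_p(0)]\chi(p) = 1 + \mu_p[1-\Pi_p(0)]\chi(p)$, whence $1-\mu_p = \bigl([1-\Pi_p(0)]\chi(p)\bigr)^{-1} > 0$. (The paper instead sums the lace-expansion equation~\eqref{eq:lace-boot}, obtaining $1 = \chi(p)^{-1} + p + \sum_x\Pi_p(x)$, which is the same conclusion.) Once $\mu_p < 1$ is known, the $\ell^1$ uniqueness argument — which you should state explicitly, since it is the crux of the exact representation — yields $[1-\Pi_p(0)]G_p = S_{\mu_p}$, and the rest of your proof (the $S_\mu$ bound from Proposition~\ref{prop:S}, the check that $\tildeD$ inherits~\eqref{eq:P_bounds}--\eqref{eq:Phat}, and the bound $p \le \mu_p(1+O(L^{-d})) \le 2$) is correct and matches the paper. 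In short: insert the ``sum the equation, then invoke uniqueness'' step before using the identity, and the argument is complete.
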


\begin{proof}
Let $L \ge L_2$ and $p \in [\half, p_c)$. 
If $b(p) \le 3$, we can apply Proposition~\ref{prop:lace},
sum equation~\eqref{eq:lace-boot} over $x\in \Zd$, 
and then divide by $\chip = \sum_{x\in \Zd} G_p(x) < \infty$, to get
\begin{equation} \label{eq:subcrit}
1 = \frac 1 \chip + p + \sum_{x\in \Zd} \Pi_p(x) .
\end{equation}
Since $\chip \in [1,\infty)$, we obtain $1 > p + \sum_{x\in \Zd} \Pi_p(x)$, 
which is equivalent to $\mup < 1$ by definition \eqref{eq:mup}.
Using \eqref{eq:Pi_bound} and \eqref{eq:Pi_L1}, from the same inequality we also get $p \le 1 + O( L^{-d})$.

To prove $b(p) \le 2$, it remains to estimate $G_p$.
We first use Proposition~\ref{prop:tilde_D} to get equation~\eqref{eq:mup}, and then we observe that both $[1-\Pi_p(0)]G_p$ and the random walk two-point function $S_\mup$ generated by $\tildeD$ are $\ell^1$ solutions to the convolution equation
\begin{equation} \label{eq:unique1}
S = \delta + \mup \tildeD * S .
\end{equation}
It then follows from the uniqueness of solutions (the Fourier transform of the difference of two solutions is $0$) that $[1 - \Pi_p(0)] G_p = S_\mup$ everywhere. 
We use Proposition~\ref{prop:S} on $\tildeD$ to estimate $S_\mup$,
and we verify the hypotheses \eqref{eq:P_bounds}--\eqref{eq:Phat} using \eqref{eq:perturb-boot}, Assumption~\ref{ass:D}, and by restricting to $L$ sufficiently large. 
This gives
\begin{equation}
[1 - \Pi_p(0)] G_p (x)
= S_\mup (x)
\le S_1 (x)
\le \frac{ K_C }{ L^{\alpha\wedge 2} \nnnorm x_L^{d - \alpha \wedge 2 } }
\times
	\begin{cases}
	1 							& (\alpha \ne 2) \\
	( \log \nnnorm {\frac x L}_1 )\inv	& ( \alpha = 2 ) 
	\end{cases} 
\end{equation}
uniformly in $x\ne 0$.
Putting the above into the definition of $b(p)$, 
and using $\Pi_p(0) = O(L^{-d})$,
we obtain $b(p) \le 1 + O(L^{-d}) \le 2$
for $L$ sufficiently large.
This concludes the proof.
\end{proof}

\begin{proof}[Proof of Theorem~\ref{thm:G=S}]
Set $L_0 = L_3$. 
We recall the bootstrap function $b(p)$ is continuous in $p \in [\half, p_c)$, 
satisfies $b(1) \le 1$, 
and cannot take any values in $(2, 3]$ when $p \in [\half, p_c)$ by Lemma~\ref{lem:boot}.
Therefore, by continuity we must have $b(p) \le 2$ for all $p \in [\half, p_c)$.
Proposition~\ref{prop:tilde_D} and the uniqueness of solution argument
given around \eqref{eq:unique1} then give \eqref{eq:G=S} and \eqref{eq:perturb} for $p \in [\half,p_c)$.

For the statements at $p_c$,
we first use the uniform bound $b(p) \le 2$ established above and 
the left-continuity of $G_p(x)$ at $p = p_c$ \cite[Lemma~2.2]{CS15} to get
\begin{equation} \label{eq:G_bound}
G\crit(x)
= \lim_{p \to p_c^-} G_p (x)
\le \frac{ 2 K_C  }{ L^{\alpha\wedge 2} \nnnorm x_L^{d - \alpha \wedge 2 } }
\times
	\begin{cases}
	1 							& (\alpha \ne 2) \\
	( \log \nnnorm {\frac x L}_1 )\inv	& ( \alpha = 2 ) 
	\end{cases} 
\end{equation}
for all $x\ne 0$.
This implies $b(p_c) \le 2$, so Proposition~\ref{prop:tilde_D} still applies and gives \eqref{eq:mup} and \eqref{eq:perturb} at $p_c$. But, to prove \eqref{eq:G=S}, we need to modify the uniqueness argument slightly and use the $L^2$ Fourier transform.
We first observe that \eqref{eq:G_bound} implies $G\crit \in \ell^2(\Zd)$, since $d > \dc$ (for $\alpha \ne 2$) or $d \ge \dc$ (for $\alpha =2$).
Similarly, Proposition~\ref{prop:S} implies $S_1 \in \ell^2(\Zd)$ in these dimensions. 
Assume $\mupc = 1$ for a moment, 
by \eqref{eq:mup} we find that both $[1-\Pi\crit(0)]G\crit$ and $S_1$ are $\ell^2$ solutions to
\begin{equation}
S = \delta + \tildeD * S ,
\end{equation}
so we can use the uniqueness of $\ell^2$ solutions (the $L^2$ Fourier transform of the difference of two solutions is $0$) to conclude $[1-\Pi\crit(0)]G\crit = S_1$, which gives \eqref{eq:G=S}.

To prove $\mupc = 1$,
we rearrange equation \eqref{eq:lace-boot} at $p_c$ as
\begin{equation}
(\delta - p_c D - \Pi\crit) * G\crit = \delta .
\end{equation}
This allows us to apply Proposition~\ref{prop:crit} with $F = \delta - p_c D - \Pi\crit \in \ell^1(\Zd)$ and $G = G\crit \in \ell^2(\Zd)$. 
Since $\sum_{x\in \Zd} G\crit(x) = \chi(p_c)= \infty$, we obtain $\sum_{x\in \Zd} F(x) = 1 - p_c - \sum_{x\in \Zd} \Pi\crit(x) = 0$, which is equivalent to $\mupc = 1$ by definition \eqref{eq:mup}.

It remains to prove \eqref{eq:near_crit_claim}.
Using a standard convolution estimate \cite[Lemma~3.5]{CS19},
the bound \eqref{eq:G_bound} implies
that the bubble diagram $B(p_c) = G\crit * G\crit(0)$ for Ising and SAW, 
or the triangle diagram $T(p_c) = G\crit^{*3}(0)$ for percolation,
is $1 + O(L^{-d}) < \infty$. 
It follows that $\chip \asymp (p_c- p) \inv$ \cite{BFF84, Slad06, AN84, Aize82}. 
By summing equation \eqref{eq:G=S} over $x\in \Zd$, we also have
\begin{equation}
\chip = \sum_{x\in \Zd} G_p(x) 
= A_p \sum_{x\in \Zd} S_\mup (x)
= \frac {A_p }{ 1 - \mup } 
= \frac { 1+ O(L^{-d } ) }{ 1 - \mup } .
\end{equation}
Comparing the two equations gives \eqref{eq:near_crit_claim}, as desired.
This concludes the proof.
\end{proof}

\subsection{Proof of Proposition~\ref{prop:lace}}
\label{sec:lace}

It remains to prove Proposition~\ref{prop:lace}.
The desired equation \eqref{eq:lace-boot} is the lace expansion equation for self-avoiding walk \cite{BS85,Slad06}. The bound \eqref{eq:Pi_bound} on $\Pi_p$ is known as a diagrammatic estimate, and it is proved by applying convolution estimates to lace expansion diagrams, first done in \cite{HHS03}. 
For the self-avoiding walk, 
Proposition~\ref{prop:lace} is proved in  \cite[Proposition~3.1]{CS15} (for $\alpha \ne 2$) and \cite[Lemma~3.6]{CS19} (for $\alpha=2$) (both denote our $\Pi_p$ by $\pi_p$), 
by improving the convolution estimates of \cite{HHS03} using $\nnnorm \cdot_L$. 
We therefore focus on percolation and the Ising model. 

The lace expansion equation for percolation \cite{HS90a} and the Ising model \cite{Saka07} takes the form
\begin{equation} \label{eq:lace_perc}
G_p = h_p + p D * h_p * G_p ,
\end{equation}
where $h_p$ is a perturbation of the Kronecker delta $\delta(x) = \delta_{0,x}$.
It is observed in \cite{LS24b} that equation \eqref{eq:lace_perc} can be transformed into the desired \eqref{eq:lace-boot} in the spread-out setting. 
The proof in \cite{LS24b} uses a Banach algebra (as in \cite{BHK18}),
and the main idea is to convolve equation \eqref{eq:lace_perc} with the convolution inverse of $h_p$, defined by the Neumann series
\begin{equation} \label{eq:Neumann}
h_p\inv 
= ( \delta - (\delta - h_p) )\inv
= \sum_{n=0}^\infty (\delta - h_p)^{*n} .
\end{equation}
This gives $ h_p\inv * G_p = \delta + p D* G_p$, so
\begin{equation} \label{eq:lace_pf}
G_p = \delta + pD * G_p + ( \delta - h_p\inv ) * G_p ,
\end{equation}
which is \eqref{eq:lace-boot} with $\Pi_p = \delta - h_p \inv = - \sum_{n=1}^\infty (\delta - h_p)^{*n} $. 
We simply estimate the series \eqref{eq:Neumann}, using the following lemma.
Note that both $f_1,f_2$ are integrable since $\theta > 0$, so the upper bound on $f_1 * f_2$ takes the same form. 

\begin{lemma} \label{lem:conv}
Let $d, L \ge 1$, $\theta > 0$, and $\phi, \psi \ge 0$.
Suppose $f_1,f_2: \Zd \to \R$ obey the bound
\begin{equation}
\abs{ f_1(x) }, \abs{f_2(x)}
\le \delta_{0,x} + \frac 1 { L^{d+\phi} \nnnorm{ \frac x L }_1^{d+\theta} 
	( \log \nnnorm{\frac x L}_1 )^{\psi} } 
\qquad (x\in \Zd) .
\end{equation}
Then there is a constant $C= C(d,\theta,\psi) > 0$ such that
\begin{equation}
\abs{ (f_1*f_2)(x) }
\le C \bigg( \delta_{0,x} + \frac 1 { L^{d+\phi} \nnnorm{ \frac x L }_1^{d+\theta}
	( \log \nnnorm{\frac x L}_1 )^{\psi} } \bigg)
\qquad (x\in \Zd) .
\end{equation}

\end{lemma}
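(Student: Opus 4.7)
The plan is to peel off the Kronecker delta from each $f_i$. Writing $|f_i(x)| \le \delta_{0,x} + g_i(x)$ with $g_i(x) := 1/(L^{d+\phi} \nnnorm{x/L}_1^{d+\theta} (\log \nnnorm{x/L}_1)^\psi)$, one has $|(f_1 * f_2)(x)| \le \delta_{0,x} + g_1(x) + g_2(x) + (g_1 * g_2)(x)$. The first three summands are already pointwise dominated by the target, so the whole task reduces to proving $(g_1 * g_2)(x) \lesssim \delta_{0,x} + g_1(x) + g_2(x)$.

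First I would record the $\ell^1$ estimate $\|g_i\|_1 \lesssim L^{-\phi}$, by splitting $\sum_y g_i(y)$ into $|y| \le L$ (volume $\asymp L^d$, summand $\asymp L^{-(d+\phi)}$, contribution $\asymp L^{-\phi}$) and $|y| > L$ (a Riemann-sum approximation comparable to $L^{-\phi}\int_1^\infty u^{-\theta-1}(\log u)^{-\psi}\,du$, convergent at infinity because $\theta>0$ and harmless near $u=1$ because on the lattice $\log \nnnorm{y/L}_1 \ge \log(\pi/2) > 0$ by the convention after \eqref{eq:nnnorm}). Next I would estimate $(g_1 * g_2)(x)$ in two regimes. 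For $|x| \le 2L$, Young's inequality gives $(g_1 * g_2)(x) \le \|g_1\|_1 \|g_2\|_\infty \lesssim L^{-\phi}\cdot L^{-(d+\phi)} \le L^{-(d+\phi)}$, which matches the target because $\nnnorm{x/L}_1 \asymp 1$ there. For $|x| > 2L$, I would split the $y$-sum into $A = \{|y| \le |x|/2\}$, $B = \{|x-y| \le |x|/2\}$, and $C = \Zd\setminus(A\cup B)$. On $A$, the bound $|x-y| \ge |x|/2 \ge L$ yields $\nnnorm{(x-y)/L}_1 \ge \tfrac12 \nnnorm{x/L}_1$ and hence $g_2(x-y) \lesssim g_2(x)$, so $\sum_A \lesssim g_2(x)\|g_1\|_1 \lesssim g_2(x)$; region $B$ is symmetric; and in $C$, applying the same comparison with $|y|$ in place of $|x-y|$ gives $g_1(y) \lesssim g_1(x)$, so $\sum_C \lesssim g_1(x)\|g_2\|_1 \lesssim g_1(x)$. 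Combining yields $(g_1 * g_2)(x) \lesssim g_1(x) + g_2(x)$.

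The main subtlety is the logarithmic factor in the comparison $g_i(y) \lesssim g_i(x)$ on $|y| > |x|/2$. The polynomial part is immediate from $\nnnorm{(x-y)/L}_1 \ge \tfrac12 \nnnorm{x/L}_1$, but the log part only gives $\log \nnnorm{(x-y)/L}_1 \ge \log \nnnorm{x/L}_1 - \log 2$ directly, whereas a multiplicative lower bound $\ge c\log \nnnorm{x/L}_1$ is needed. This is handled by splitting into $\nnnorm{x/L}_1$ large (where $\log 2 \le \tfrac12 \log \nnnorm{x/L}_1$) versus $\nnnorm{x/L}_1$ of order $1$ (where $(\log \nnnorm{x/L}_1)^\psi$ is already a bounded constant and $(\log \nnnorm{(x-y)/L}_1)^{-\psi} \le (\log(\pi/2))^{-\psi}$). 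Once this step is isolated, the argument is routine region-splitting, and the resulting constant depends only on $d,\theta,\psi$ as claimed (independence of $L$ and $\phi$ coming from $L^{-\phi} \le 1$).
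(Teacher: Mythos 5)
Your argument is correct, and it takes a genuinely different (more self-contained) route from the paper's. The paper writes $f_j = f_j(0)\delta + g_j$ with $g_j := f_j - f_j(0)\delta$ (so $g_j$ vanishes at the origin and equals $f_j$ off it), reduces to bounding $g_1 * g_2$, and then simply invokes the pre-packaged convolution estimate of \cite[Lemma~3.5]{CS19} (or \cite[Lemma~3.2(i)]{CS15} when $\psi = 0$), which states precisely that
\begin{equation*}
\sum_{y\in\Zd} \frac{1}{\nnnorm{x-y}_L^{d+\theta}(\log\nnnorm{\frac{x-y}{L}}_1)^{\psi}}\,\frac{1}{\nnnorm{y}_L^{d+\theta}(\log\nnnorm{\frac{y}{L}}_1)^{\psi}}
\lesssim \frac{L^{-\theta}}{\nnnorm{x}_L^{d+\theta}(\log\nnnorm{\frac{x}{L}}_1)^{\psi}} .
\end{equation*}
You instead peel off the delta via an inequality rather than an equality, and then reprove that convolution estimate from scratch by the standard three-region decomposition $A\cup B\cup C$, together with the $\ell^1$ bound $\|g_i\|_1\lesssim L^{-\phi}$ and Young's inequality for $|x|\le 2L$. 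That is a perfectly legitimate and elementary derivation, and you correctly isolate the only nontrivial point, namely turning the additive lower bound $\log\nnnorm{\frac{x-y}{L}}_1 \ge \log\nnnorm{\frac{x}{L}}_1 - \log 2$ into a multiplicative one by splitting according to whether $\nnnorm{\frac{x}{L}}_1$ is large or of order one, using the floor $\log\nnnorm{\cdot}_1 \ge \log\frac{\pi}{2}>0$. The trade-off is purely stylistic: the paper's proof is a two-line reduction to a cited lemma, while yours is longer but does not lean on \cite{CS19} and makes the mechanism visible. One small point of hygiene: the comparison integral you display, $\int_1^\infty u^{-\theta-1}(\log u)^{-\psi}\,du$, is actually divergent at $u=1$ when $\psi\ge1$; the honest comparison has $\log(\frac{\pi}{2}u)$ in place of $\log u$, which is exactly what your parenthetical remark about the lattice convention is supplying — worth writing explicitly so the displayed integral itself is finite.
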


\begin{proof}
For $j \in \{1,2\}$,
define $g_j = f_j - f_j(0)\delta$, so that $g_j(0) = 0$ and $g_j(x) = f_j(x)$ when $x\ne 0$.
Since
\begin{equation}
f_1*f_2 = f_1(0)f_2(0) \delta + f_1(0) g_2 + f_2(0)g_1 + g_1*g_2 ,
\end{equation}
and since $\abs {f_j(0)} \le 1 + L^{ -(d + \phi)} \le 2$, 
we only need to estimate $g_1*g_2$.
By the hypothesis, we have
\begin{align}
\abs{ g_1*g_2(x) }
&\le \sum_{y\in \Zd} \frac1{ L^{d+\phi}\nnnorm{ \frac {x-y} L }_1^{d+\theta} ( \log \nnnorm{\frac {x-y} L}_1 )^{\psi}} 
	\frac 1 { L^{d+\phi} \nnnorm{ \frac y L }_1^{d+\theta} ( \log \nnnorm{\frac y L}_1 )^{\psi}}   \nl
&= \frac {L^{2\theta}} {L^{2\phi}} \sum_{y\in \Zd} \frac 1{ \nnnorm{ x-y }_L^{d+\theta} ( \log \nnnorm{\frac {x-y} L}_1 )^{\psi} } 
\frac 1 { \nnnorm{ y }_L^{d+\theta} ( \log \nnnorm{\frac y L}_1 )^{\psi}}   \nl
&\le \frac {L^{2\theta}} {L^{2\phi}} \frac{ C_{d,\theta,\psi} L^{-\theta} }{ \nnnorm x_L^{d+\theta} ( \log \nnnorm{\frac x L}_1 )^{\psi} }
= \frac {C_{d,\theta,\psi}} {L^{\phi}} \frac{ 1 }{ L^{d+\phi} \nnnorm{ \frac x L }_1^{d+\theta} ( \log \nnnorm{\frac x L}_1 )^{\psi} } ,
\end{align}
where in the last line we used the convolution estimate \cite[Lemma~3.5]{CS19} with $a_1 = b_1 = d + \theta$ and $a_2 = b_2 = \psi$ (or use \cite[Lemma~3.2(i)]{CS15} if $\psi=0$).
This gives the desired estimate, since $\phi \ge 0$.
\end{proof}

\begin{proof}[Proof of Proposition~\ref{prop:lace} for percolation and the Ising model]
It is proved in \cite[Proposition~3.1]{CS15} (for $\alpha \ne 2$) and \cite[Lemma~3.6]{CS19} (for $\alpha=2$) (both denote our $h_p$ by $\pi_p$) that, under $b(p) \le 3$, 
equation \eqref{eq:lace_perc} holds with a function $h_p$ that obeys
\begin{equation} \label{eq:h_bound}
\abs{ h_p(x) - \delta_{0,x} }
\le K_h  \frac { \delta_{0,x} } {L^d} + 
K_h \bigg( \frac 1 { L^d \nnnorm{ \frac x L }_1^{d-\alpha \wedge 2 } } \bigg)^\ell 
\times \begin{cases}
1 						& (\alpha \ne 2) \\
( \log \nnnorm {\frac x L}_1 )^{-\ell}	& ( \alpha = 2 ) ,
\end{cases} 
\end{equation}
where $K_h > 0$ is independent of $p,L,x$.
(See also \cite[Proposition~1.8]{HHS03} for percolation and \cite[Corollaries~5.5, 5.10, and 5.13]{Saka07_correction} for the Ising model.)

Set $\phi = (\ell-2) d$, $\psi = \ell \1_{\alpha = 2}$, and
\begin{equation}
\theta = \ell( d - \alpha \wedge 2) - d 
\ge \alpha \wedge 2 > 0,
\end{equation}
where the middle inequality is due to $d \ge \dc = \frac{\ell + 1 }{ \ell - 1} (\alpha\wedge2)$.
We first apply Lemma~\ref{lem:conv} to $f_1 = f_2 = \frac{L^d}{K_h}( h_p - \delta )$, then a simple induction gives
\begin{equation} \label{eq:Neumann_pf}
\abs{ ( h_p - \delta )^{*n} (x) }
\le C^{n-1} \bigg( \frac{K_h}{L^d}\bigg)^n
	\bigg( \delta_{0,x} + \frac 1 { L^{(\ell-1)d} \nnnorm{ \frac x L }_1^{ \ell(d-\alpha \wedge 2)}
	( \log \nnnorm{\frac x L}_1 )^{\psi} } \bigg)
\end{equation}
for all $n\ge 1$.
By restricting to $L$ large enough such that $CK_h L^{-d} < 1$, 
we can sum \eqref{eq:Neumann_pf} over $n\ge1$ to get
\begin{equation} \label{eq:Neumann_sum}
\biggabs{ \sum_{n=1}^\infty (\delta - h_p)^{*n} (x) }
\le \frac{K_h L^{-d} }{ 1 - CK_h L^{-d} } \bigg( \delta_{0,x} + \frac 1 { L^{(\ell-1)d} \nnnorm{ \frac x L }_1^{ \ell(d-\alpha \wedge 2)}
	( \log \nnnorm{\frac x L}_1 )^{\psi} } \bigg) .
\end{equation}
It follows that the Neumann series \eqref{eq:Neumann} defining $h_p\inv$ converges, and from equation \eqref{eq:lace_pf} we get the desired \eqref{eq:lace-boot}.
The desired estimate on $\Pi_p = \delta - h_p\inv = - \sum_{n=1}^\infty (\delta - h_p)^{*n}$ also follows from \eqref{eq:Neumann_sum}.
This concludes the proof.
\end{proof}

\appendix

\section{Characterisation of critical point}

In this appendix, we develop a tool used in the proof of Theorem~\ref{thm:G=S} to conclude that $\mupc = 1$, or equivalently, that
\begin{equation} \label{eq:pc}
1 =  p_c + \sum_{x\in \Zd} \Pi\crit(x) .
\end{equation}
This characterisation of the critical point $p_c$ is traditionally obtained by taking the $p \to p_c^-$ limit of the subcritical version of \eqref{eq:pc} presented in \eqref{eq:subcrit}. 
The traditional way, however, uses the left-continuity of $\Pi_p$ at $p_c$, which for some models is not so straightforward to prove (see \cite[Appendix~A]{Hara08} for percolation\footnote{
As pointed out by a referee, Hara's proof for percolation also works for the Ising model, as the lace expansion coefficients are defined in terms of the same events.
} and the self-avoiding walk).
We work directly at $p_c$ and avoid using the left-continuity of $\Pi_p$.

We consider functions $F, G: \Zd \to \R$ that satisfy
\begin{equation} \label{eq:FG}
(F*G)(x) = \delta_{0,x}
	\qquad(x\in \Zd) .
\end{equation}
If both $F, G\in \ell^1(\Zd)$, or if both $F,G\ge 0$, by summing \eqref{eq:FG} over all $x\in \Zd$ and using Fubini's theorem, we have 
\begin{equation} 
\sum_{x\in \Zd} F(x)  \sum_{x\in \Zd} G(x)  = 1.
\end{equation}
The main result of this appendix is an extension of the above to the case where $G\in \ell^2 \setminus \ell^1(\Zd)$.

\begin{proposition} \label{prop:crit}
Let $d\ge 1$.
Suppose $F \in \ell^1(\Zd)$, 
$G\in \ell^2(\Zd)$, $G\ge 0$,
and suppose they satisfy equation \eqref{eq:FG}.
Then
\begin{equation}
\sum_{x\in \Zd} G(x) = \infty
\quad \implies \quad
\sum_{x\in \Zd} F(x) = 0.
\end{equation}
\end{proposition}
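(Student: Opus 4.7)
The plan is to work on the Fourier side. Since $F \in \ell^1(\Zd)$, the Fourier series $\hat F(k) = \sum_x F(x) e^{ik\cdot x}$ defines a continuous function on $\Td$ with $\hat F(0) = \sum_x F(x)$; since $G \in \ell^2(\Zd)$, $\hat G$ defines an $L^2$ class on $\Td$. By Young's inequality $F*G \in \ell^2(\Zd)$, so taking Fourier transforms in equation \eqref{eq:FG} gives
\[
\hat F(k)\, \hat G(k) = 1 \qquad \text{for a.e.\ } k \in \Td.
\]
Arguing by contradiction, suppose $\hat F(0) \ne 0$. By continuity of $\hat F$, there exist an open neighborhood $U$ of $0$ in $\Td$ and $c > 0$ with $|\hat F(k)| \ge c$ on $U$, and hence $|\hat G(k)| \le 1/c =: M$ for a.e.\ $k \in U$. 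The proof reduces to the claim that $G \ge 0$ together with this local essential boundedness of $\hat G$ already forces $\sum_x G(x) \le M$, contradicting the hypothesis $\sum_x G(x) = \infty$.

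To prove the claim, I would test $G$ against a Gaussian mollifier $\psi_\eps(x) = e^{-\eps|x|^2}$ on $\Zd$. As the restriction to $\Zd$ of a Gaussian on $\Rd$, $\psi_\eps$ is positive definite, so Bochner gives $\hat\psi_\eps \ge 0$, and Poisson summation yields the explicit formula
\[
\hat\psi_\eps(k) = (\pi/\eps)^{d/2} \sum_{m\in\Zd} \exp\bigl( -|k-2\pi m|^2/(4\eps) \bigr) .
\]
From this I read off $\int_{\Td} \hat\psi_\eps\, dk = (2\pi)^d$ and $\|\hat\psi_\eps\|_{L^\infty(U^c)} = O(\eps^{-d/2} e^{-\eta^2/(4\eps)})$ for a suitable $\eta > 0$ depending on $U$, which tends to $0$ super-polynomially as $\eps \to 0^+$. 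Parseval's identity yields
\[
\sum_x G(x)\, \psi_\eps(x) = (2\pi)^{-d} \int_{\Td} \hat G(k)\, \hat\psi_\eps(k)\, dk,
\]
and splitting the integral at $\partial U$, using $|\hat G| \le M$ on $U$ for the first piece and Cauchy--Schwarz with $\hat G \in L^2$ for the second, I conclude $\sum_x G(x)\psi_\eps(x) \le M + o(1)$ as $\eps \to 0^+$. Since $G \ge 0$ and $\psi_\eps \uparrow 1$ pointwise, monotone convergence yields $\sum_x G(x) \le M < \infty$, the desired contradiction.

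The main obstacle is exactly the control of $\int_{U^c} \hat G\, \hat\psi_\eps$ as the mollifier limit is taken. Simpler candidates for the test sequence, such as the product Poisson kernel $r^{|x|_1}$ or the product Fej\'er kernel, fail in dimensions $d \ge 4$: the $L^2$-norms of their Fourier transforms restricted to $U^c$ blow up as the parameter approaches the critical limit, so Cauchy--Schwarz is not enough. The Gaussian mollifier circumvents this because $\hat\psi_\eps$ is itself Gaussian-like and decays super-polynomially away from $0$, giving uniform control in $\eps$ and in all dimensions.
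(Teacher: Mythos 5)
Your argument is correct, and it is a genuine alternative to the paper's route. Both proofs share the same skeleton: pass to Fourier space where $\hat F\,\hat G = 1$ a.e., assume $\hat F(0)\ne 0$ for contradiction, and test $G$ against a family of nonnegative functions increasing pointwise to $1$, so that monotone convergence forces $\sum_x G(x)<\infty$. The divergence is in the choice of test family and the machinery used to control it. The paper takes $\phi_R(x)=(1-|x|^2/R^2)^\alpha_+$ with $\alpha>\tfrac{d-1}{2}$ and invokes the summability properties of the Bochner--Riesz kernel on $\Td$ (citing Grafakos), together with continuity of $\hat G=1/\hat F$ near $0$, to conclude $\sum_{|x|\le R}(1-|x|^2/R^2)^\alpha G(x)\to 1/\hat F(0)$. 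You instead take $\psi_\eps(x)=e^{-\eps|x|^2}$, derive $\hat\psi_\eps\ge0$ and the explicit formula on $\Td$ by Poisson summation, and run a Parseval/Cauchy--Schwarz argument split at $\partial U$; you only need the weaker fact that $|\hat G|$ is essentially bounded near $0$, not continuity. Your route is more self-contained (no external summability-kernel theory) at the cost of a few more explicit estimates, while the paper's is shorter because it offloads the kernel bounds to a textbook proposition. Your closing observation is also accurate and worth noting: the super-polynomial decay of the Gaussian's Fourier transform away from the origin is exactly what makes the $U^c$ piece negligible uniformly in $d$, whereas product Fej\'er or Poisson kernels have Fourier-side $L^2(U^c)$ norms that grow with $d$ (roughly like $N^{(d-3)/2}$ for Fej\'er) and so stop working for $d\ge 4$; the Bochner--Riesz kernel avoids this by letting its index $\alpha$ scale with $d$.
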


The proof uses the \emph{Bochner--Riesz operator} in the Fourier space.
We write $\T^d=(\R/2\pi\Z)^d$ for the continuum torus, 
and we use the (inverse) Fourier transform
\begin{equation}
\hat f(k)  = \sum_{x\in\Z^d}f(x) e^{ik\cdot x} 	\quad (k \in \T^d),
\qquad
\check u(x) = \int_{\T^d} u(k) e^{-ik\cdot x}  \frac{ dk }{ (2\pi)^d } \quad (x\in \Z^d)
\end{equation}
for $f \in \ell^1(\Zd)$ and $u\in L^1(\Td)$.
For $R>0$, $\alpha \ge 0$, and $u \in L^1(\Td)$, we define
\begin{equation}
B_R^\alpha[u](k) 
= \sum_{\abs x \le R} \biggl( 1 - \frac{ \abs x^2 }{R^2} \biggr)^\alpha \check u(x) e^{ik\cdot x}
	\qquad (k\in \Td) .
\end{equation}
The Bochner--Riesz operator $B_R^\alpha$ is an approximate identity:

\begin{lemma} \label{lem:BR}
Let $d\ge 1$.
If $\alpha > \frac{d-1}2$
and if the function $u \in L^1(\Td)$ is continuous at $k_0 \in \Td$,
then
\begin{equation}
\lim_{R\to \infty} B_R^\alpha[u](k_0) 
= u(k_0) .
\end{equation}
\end{lemma}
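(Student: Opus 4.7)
The plan is to interpret $B_R^\alpha$ as convolution against a Bochner--Riesz kernel on $\Td$ and verify the three standard properties of an approximate identity. Plugging the definition of $\check u$ into $B_R^\alpha[u](k_0)$ and applying Fubini gives
\begin{equation*}
B_R^\alpha[u](k_0) = \int_{\Td} u(k_0 + k)\, K_R^\alpha(k)\, \frac{dk}{(2\pi)^d},
\qquad K_R^\alpha(k) := \sum_{x \in \Zd} \phi_R(x)\, e^{ik \cdot x},
\end{equation*}
where $\phi_R(x) = (1 - |x|^2/R^2)^\alpha\,\1_{|x| \le R}$. Since $\phi_R$ is $\Zd$-symmetric, $K_R^\alpha$ is real and even in $k$, and a term-by-term integration yields $\int_{\Td} K_R^\alpha(k)\, (2\pi)^{-d}\, dk = \phi_R(0) = 1$.

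The main task is to establish (i) $\sup_R \|K_R^\alpha\|_{L^1(\Td)} < \infty$ and (ii) $\sup_{k \in \Td,\, |k|>\delta} |K_R^\alpha(k)| \to 0$ as $R\to\infty$ for each fixed $\delta \in (0,\pi)$. My approach would be Poisson summation on the continuous, compactly supported $\phi_R$:
\begin{equation*}
K_R^\alpha(k) = \sum_{n \in \Zd} \Phi_R(k + 2\pi n),
\qquad \Phi_R(\xi) := \int_{\R^d} \phi_R(x)\, e^{-i\xi \cdot x}\, dx = R^d\, \Phi_1(R\xi).
\end{equation*}
The classical Bessel-function representation of $\Phi_1$ yields the pointwise bound $|\Phi_1(\xi)| \lesssim (1+|\xi|)^{-(d+1)/2 - \alpha}$, and the hypothesis $\alpha > (d-1)/2$ is precisely the threshold for $\Phi_1 \in L^1(\R^d)$. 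Then (i) follows from $\|K_R^\alpha\|_{L^1(\Td)} \le \|\Phi_R\|_{L^1(\R^d)} = \|\Phi_1\|_{L^1(\R^d)}$ by a standard Poisson/Fubini argument. For (ii), any $k \in \Td = (-\pi,\pi]^d$ with $|k|>\delta$ satisfies $|k+2\pi n| \ge \delta$ for $n=0$ and $\norm{k+2\pi n}_\infty \ge \pi$ (hence $|k+2\pi n| \ge \pi$) for $n \ne 0$, so summing the Poisson series gives $|K_R^\alpha(k)| \lesssim R^{(d-1)/2 - \alpha} \to 0$ (the exponent is negative precisely because $\alpha > (d-1)/2$).

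The proof concludes via a standard approximate-identity argument. Using the normalisation of $K_R^\alpha$,
\begin{equation*}
B_R^\alpha[u](k_0) - u(k_0) = \int_{\Td} [u(k_0 + k) - u(k_0)]\, K_R^\alpha(k)\, \frac{dk}{(2\pi)^d},
\end{equation*}
which I would split at $|k| < \delta$ and $|k| \ge \delta$. Given $\epsilon > 0$, continuity of $u$ at $k_0$ selects $\delta$ so the small-$|k|$ integral is bounded by $\epsilon\, \|K_R^\alpha\|_{L^1(\Td)}\, (2\pi)^{-d} \le C\epsilon$ via (i); the large-$|k|$ integral is bounded by $\bigl(\sup_{|k|>\delta}|K_R^\alpha(k)|\bigr)\cdot\bigl(\|u\|_{L^1(\Td)}(2\pi)^{-d} + |u(k_0)|\bigr)$, which vanishes as $R\to\infty$ by (ii). Letting $\epsilon\to 0$ yields the claim. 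The main obstacle is the pair (i)--(ii), which rest on Bessel asymptotics for the Bochner--Riesz kernel on $\R^d$; the threshold $\alpha > (d-1)/2$ in the lemma is exactly the sharp integrability exponent of those kernels, reflecting classical Bochner--Riesz theory.
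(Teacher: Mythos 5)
Your argument is essentially the paper's: you realize $B_R^\alpha$ as convolution against a Bochner--Riesz summability kernel on the torus, verify that this kernel is a uniformly $L^1$-bounded, normalized family whose supremum away from the origin decays like $R^{(d-1)/2-\alpha}$, and then run the standard approximate-identity argument localized at $k_0$. The only difference is that you re-derive those kernel estimates from scratch (Poisson summation plus Bessel asymptotics), whereas the paper simply quotes them from the proof of Grafakos's Proposition~4.1.9 and adapts his Theorem~1.2.19 to continuity at a single point.
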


\begin{proof}[Proof of Proposition~\ref{prop:crit} assuming Lemma~\ref{lem:BR}]
We use the Fourier transform. 
The hypotheses imply that $\hat F$ is continuous on $\Td$ and
that $\hat G \in L^2(\Td) \subset L^1(\Td)$. 
From equation \eqref{eq:FG} we get
$\hat F(k) \hat G(k) = 1$
for almost every $k\in \Td$. 
Our goal is to prove $\hat F(0) = 0$. We prove by contradiction.

Suppose $\hat F(0) \ne 0$.
Since $\hat F$ is continuous, the function $1 / \hat F(k)$ is defined and continuous in a neighbourhood of $0$.
We apply Lemma~\ref{lem:BR} with $u = 1 / \hat F = \hat G \in L^1(\Td)$, $k_0 = 0$, and with any choice of $\alpha > \frac{d-1} 2$, to get
\begin{equation}
\lim_{R\to \infty} B_R^\alpha[1 / \hat F](0) 
= \frac 1 { \hat F(0) } .
\end{equation}
On the other hand, 
since the inverse Fourier transform of $1/ \hat F = \hat G$ is $G$, the definition of $B_R^\alpha$ gives
\begin{equation}
B_R^\alpha [ 1 /  \hat F ] (0)
= B_R^\alpha [ \hat G ] (0)
= \sum_{\abs x \le R} \biggl( 1 - \frac{ \abs x^2 }{R^2} \biggr)^\alpha G(x)
\to  \sum_{x \in \Zd} G(x) = \infty ,
\end{equation}
by monotone convergence.
This is a contradiction to $\hat F(0) \ne 0$,
so the proof is complete.
\end{proof}

It remains to prove Lemma~\ref{lem:BR}. 
The lemma is a slight variant of \cite[Proposition~4.1.9]{Graf14}, 
which states that $B_R^\alpha[u]$ converges uniformly to $u$ if $u$ is continuous on all of $\Td$. 
In that proof, it is shown that, when $\alpha > \frac{d-1}2$, there is a family of kernels $L^{\alpha, R} : \Td \to \C$ for which
\begin{equation} \label{eq:kernel}
B_R^\alpha[u] = u * L^{\alpha, R}
	\qquad (u\in L^1(\Td)) ,
\end{equation}
with the properties that\footnote{
Properties \eqref{eq:kernel_L1}--\eqref{eq:delta_L1} make the family $\{ L^{\alpha, R} \}_{R>0}$ an \emph{approximate identity}, also known as a \emph{summability kernel}.
}
\begin{align}
\label{eq:kernel_L1}
\int_\Td \abs{  L^{\alpha, R}(k) } dk &\le C_{d,\alpha} , 
\\
\label{eq:kernel_mean}
\int_\Td  L^{\alpha, R}(k)  dk &= 1 , 
\\
\label{eq:delta_L1}
\int_{ \delta \le \norm k_\infty \le \pi } \abs{  L^{\alpha, R}(k) } dk 
	&\le C_{d,\alpha, \delta} R^{-(\alpha - \frac{d-1}2)} 
\end{align}
for all $R > 0$ and all $0 < \delta < \pi$. 
(\cite{Graf14} uses the torus with period $1$ rather than our $2\pi$.)
The proof of \eqref{eq:delta_L1} also yields the stronger statement that
\begin{equation} \label{eq:delta_sup}
\sup_{ \delta \le \norm k_\infty \le \pi } \abs{  L^{\alpha, R}(k) } 
	\le C_{d,\alpha, \delta}' R^{-(\alpha - \frac{d-1}2)} 
\end{equation}
for all $R > 0$.

\begin{proof}[Proof of Lemma~\ref{lem:BR}]
We modify the proof of \cite[Theorem~1.2.19]{Graf14}.
Let $u \in L^1(\Td)$ be continuous at $k_0 \in \Td$,
so that for any $\eps > 0$ there is a $\delta > 0$ for which $\norm k_\infty < \delta$ implies $\abs{ u(k_0 - k) - u(k_0) } < \eps$.
We use \eqref{eq:kernel} and \eqref{eq:kernel_mean} to write
\begin{equation}
B_R^\alpha[u](k_0) - u(k_0)
= \int_\Td  L^{\alpha,R}(k)  [ u(k_0 - k) - u(k_0) ] dk .
\end{equation}
By splitting the integral according to how $\norm k_\infty$ compares with $\delta$, and by using \eqref{eq:delta_sup},
we get
\begin{equation}
\abs{ B_R^\alpha[u](k_0) - u(k_0) }
\le  \eps \norm{L^{a,R} }_1 
	+ O(R^{-(\alpha - \frac{d-1}2)}) \big( \norm u_1 + (2\pi)^d \abs{ u(k_0) } \big) .
\end{equation}
Sending $R \to \infty$, \eqref{eq:kernel_L1} then gives
\begin{equation}
\limsup_{R\to\infty}\, \abs{ B_R^\alpha[u](k_0) - u(k_0) }
\le C_{d,\alpha} \eps ,
\end{equation}
and we are done because $\eps > 0$ is arbitrary.
\end{proof}

\section*{Acknowledgements}
The work was supported in part by 
the National Natural Science Foundation of China (Grant No.~12595284, 12595280) 
and NSERC of Canada.
We thank Noe Kawamoto, Romain Panis, and Gordon Slade for comments on a preliminary version.
We thank the anonymous referees for their careful reading and constructive comments.

% For submission
%\subsection*{Data availability}
%No data is generated or analysed in our work.
%
%\subsection*{Conflict of interest} 
%The authors have no relevant financial or non-financial interests to disclose.

{\small
%\bibliographystyle{plain}
%\bibliography{bib, bib_YL}

}
\end{document}